\newtheorem{theo}{Theorem}[section]
\newtheorem{example}[theo]{Example}
\newtheorem{lem}[theo]{Lemma}
\newtheorem{prop}[theo]{Proposition}
\newtheorem{definition}[theo]{Definition}
\newtheorem{rem}[theo]{Remark}
\newcommand{\bd}{\begin{displaymath}}
\newcommand{\ed}{\end{displaymath}}
\newcommand{\be}{\begin{equation}}
\newcommand{\ee}{\end{equation}}
\newcommand{\bea}{\begin{eqnarray}}
\newcommand{\eea}{\end{eqnarray}}
\newcommand{\bda}{\begin{eqnarray*}}
\newcommand{\eda}{\end{eqnarray*}}
\newcommand{\ba}{\begin{array}}
\newcommand{\ea}{\end{array}}
\newcommand{\B}{I\kern -.35em B}
\newcommand{\gr}{\mbox{\rm graph\,}}
\newcommand{\R}{\mathbb R}
\renewenvironment{proof}[1]{%
\par\vspace{1\baselineskip}%
\noindent{\em Proof#1.\ \ }\ignorespaces }{%
\nobreak\hfill\mbox{\ \ $\Box$}%
\par\vspace{1\baselineskip}%
}
\newcommand{\Mt}{\Rightarrow}
\newcommand{\st}{\subset}
\newcommand{\sm}{\setminus}
\newcommand{\U}{\mathcal U}
\newcommand{\di}{\mbox{\rm dist}}
\newcommand{\E}{{\cal E}}
\newcommand{\e}{\varepsilon}
\newcommand{\al}{\alpha}
\newcommand{\ph}{\varphi}
\newcommand{\sth}{ \, :\;}
\newcommand{\dd}{\mbox{\rm\,d}}
\renewcommand{\lll}{\langle}
\newcommand{\rrr}{\rangle}
\newcommand{\p}{{\mathcal P}}
\newcommand{\Y}{\mathcal Y}
\newcommand{\Z}{\mathcal Z}
\def\R{\mathbb{R}}
\newcommand{\To}{\rightrightarrows }
\newcommand{\tu}{{\tilde u}}
\newcommand{\hu}{{\hat u}}
\newcommand{\bino}{\bigskip\noindent}
\title{On the accuracy of the model predictive control method\thanks{This research is supported by 
the Austrian Science Foundation (FWF) under grant No I4571.}}
\author{
Georgi Angelov\thanks{Institute of Statistics and Mathematical Methods in Economics,
Vienna University of Technology, Austria, {\tt georgi.angelov@tuwien.ac.at}} 
\and  
Alberto Dom\'inguez Corella\thanks{Institute of Statistics and Mathematical Methods in Economics,
Vienna University of Technology, Austria, {\tt alberto.corella@tuwien.ac.at}} 
\and  
Vladimir M. Veliov\thanks{Institute of Statistics and Mathematical Methods in Economics,
Vienna University of Technology, Austria, {\tt vladimir.veliov@tuwien.ac.at}} }
\date{}
\begin{document}
	\maketitle

\begin{abstract}
The paper investigates the accuracy of the Model Predictive Control (MPC) method for finding online approximate optimal feedback control for Bolza type problems on a fixed finite horizon. The predictions for the dynamics, the state measurements, and the solution of the auxiliary open-loop control
problems that appear at every step of the MPC method may be inaccurate. The main result provides an error estimate of the MPC-generated solution compared with the optimal open-loop solution of the “ideal”
problem, where all predictions and measurements are exact. The technique of proving the estimate involves an extension of the notion of strong metric sub-regularity of set-valued maps and utilization of a
specific new metric in the control space, which makes the proof non-standard. The result is specialized
for two problem classes: coercive problems, and affine problems.
\end{abstract}
\medskip

{\bf Keywords}: optimal control, Bolza problem, model predictive control, metric sub-regularity \\

{\bf MSC Classification}: 93B45, 49M99, 49J40, 47J20

\section{Introduction} \label{SIntro}

Model Predictive Control (MPC) is a powerful method for approximate on-line feedback control, widely used in 
industrial applications and recently  in digital engine control and microelectronics, see, e.g., 
\cite{G+P-11,MPOC_HB-18,Wolf-16}. On the other hand, the rigorous mathematical theory investigating 
the scope of validity and the efficiency of the MPC method under appropriate 
assumptions is still underdeveloped. The present paper contributes to this theory by investigating the accuracy of a version
of the MPC method applied to a finite horizon optimal control problem (the so-called {\em economic MPC} with shrinking 
horizon). 

To set the stage, we consider the following optimal control problem, further denoted by $\p_p(0,x_0)$:
\be \label{EOF}
  \min_{u\in \U} \Big\{ J_p(u) := g_T(x(T)) + \int_0^T  g(p(t),x(t),u(t)) \dd t \Big\},
\ee
subject to
\be \label{Ex}
      \dot x(t) = f(p(t), x(t), u(t))  \quad  x(0)=x_0.
\ee
Here the state vector $x(t)$ belongs to $\R^n$ and the control function $u(\cdot)$ belongs to the set $\U$ 
of all Lebesgue measurable functions $u:[0,T] \to U$, where $U\st \R^m$. The function $p$ represents 
an uncertain time-dependent parameter which is known to belong to 
a set $\varPi$ of bounded Lebesgue measurable functions $p:[0,T] \to \R^l$.
Correspondingly, $f:\R^l\times\R^n\times\R^m\to\R^n$, $g:\R^l\times\R^n\times\R^m\to\R$, $g_T:\mathbb R^n\to\mathbb R$. 
The initial state $x_0 \in \R^n$ and the final time $T>0$ are fixed.

A version of the MPC method (called further MPC algorithm) applied to the above problem  
is described in detail in Subsection \ref{SS_MPC}. Here we briefly present the main result given in Theorem \ref{Tmain},
Subsection \ref{SS_main}. 
It is assumed that for some particular parameter function, $\hat p$, equation (\ref{Ex}) represents 
a real system, therefore problem (\ref{EOF})--(\ref{Ex}) with $p = \hat p$ (that is, problem $\mathcal P_{\hat p}(0,x_0)$)
is called {\em reference problem}. However, $\hat p$ and the initial state $x_0$ are not assumed to be exactly known.
The MPC algorithm generates a control function by solving a sequence of auxiliary open-loop optimal control problems.
Given a mesh $0 = t_0 < t_1 < \ldots < t_N = T$, the auxiliary problem at the $k$-th step is of the same kind as 
(\ref{EOF})--(\ref{Ex}), but on the shorter time-interval $[t_k, T]$.
A prediction $p$ for the parameter function on $[t_k,T]$ is given, 
and the initial state at $t_k$ is obtained by measuring the real systems state at time $t_k$. 
Both the prediction and the measurement 
may be inaccurate. In addition, the auxiliary problem at the $k$-th step
is only approximately solved, which is another source of error. The approximate optimal control in the $k$-th auxiliary problem
is only applied to the `'real'' system on the interval $[t_k, t_{k+1}]$, then the procedure is further repeated
on the next interval, resulting at the end  in what is called the MPC- generated control.
\begin{rem}
There is an amount of literature related to robustness of the MPC method, see e.g.\ \cite{Langson+Chryssochoos+Rakovic+Mayne-2004,Grimm+Messina+Tuna+Teel-2007}
and several chapters in \cite{MPOC_HB-18}. Stabilization or target problems for discrete dynamics are usually 
considered for particular classes of problems, and various notions of robustness and approaches 
are investigated – see e.g.\ the overview paper \cite[Chapter by Rakovi\'c]{Baillieul+Samad-2020}. The result in the 
present paper, as explained above, is more related to the paper \cite{Grune+Palma-2014}. In that paper the accuracy 
of the MPC method is investigated (for a general class of discrete-time problems) in terms of the 
objective value only. Of course, this is the primal accuracy criterion; however, it is of interest to know 
how close the MPC-generated control and trajectory are to the optimal open-loop one for the case 
of exact and complete information. The present paper contributes to this issue and, most 
importantly, employs and further develops the approach based on the general property of strong 
sub-regularity of a map associated with the problem under consideration.
\end{rem}

The main result in the paper gives an estimate of the difference
between the MPC-generated control and the optimal open-loop control for the reference problem
(the latter corresponding to the ``ideal'' scenario where the prediction, the measurement, and the solution 
of the auxiliary problems are all exact). A remarkable feature of the estimation is that the overall error of 
the MPC-generate control depends on the {\em average} of the errors appearing at the steps of the algorithm, 
thus occasional relatively large errors in the prediction or measurement do not substantially damage the MPC-generated control.
Another interesting feature of the overall error is that for some classes of problems it depends linearly on the averaged 
errors appearing at the steps of the method, while for other classes, the estimate of the overall error 
depends on the square root of the averaged errors (and this estimate is sharp).
We mention that an estimate of the difference between the MPC-generated control 
and the realization of the optimal {\em feedback} control in the reference problem 
is obtained \cite{AD+IK+MK+VV+PV-20}. In \cite{AD+IK+MK+VV-20}, this result is extended to 
a comparison with  the optimal open-loop control in the reference problem, as in the present paper.
However, in both quoted papers the results are obtained within a much more restrictive framework:
a single prediction is used which does not change from step to step, the results only apply to the Euler discretization
of the auxiliary problems, and most importantly, the reference optimal control problem is assumed {\em coercive}
(see Subsection  \ref{SS_coerc} for the notion). In fact, the main goal of the present paper is to extend 
the results about the accuracy of the MPC method beyond the coercive case, especially for affine control
problems. 

The main result (the error estimate in Theorem \ref{Tmain}) is obtained under general assumptions;
the most demanding one is the requirement that the 
map associated with the first order optimality conditions (called {\em optimality map})
for the reference problem is {\em strongly metrically sub-regular}
in an appropriate space setting. In Subsection \ref{SS_SsR}, we extend the abstract notion of  
strong metric sub-regularity of a set-valued map by involving two metrics in the domain of  the map.
In Subsection \ref{SSOptMap}, we define the optimality map and the space setting. In the control space
(which is a projection of the domain of  the optimality map) 
we introduce a specific new metric, which is of key importance for the analysis of the MPC method
and may be useful in other contexts.  

The proof of  Theorem \ref{Tmain} (given in Appendix) is non-trivial and substantially differs from all proofs 
of error estimates in optimal control that the authors know.

Section \ref{S_conditions} presents or recalls sufficient conditions for the extended strong metric sub-regularity 
of the optimality map for two non-intersecting classes of problems: for {\em coercive problems} and for {\em affine problems}.
The paper concludes with an example where the MPC algorithm is applied to a spacecraft stabilization problem.
The numerical results confirm the theoretical error estimate and its sharpness.  

\section{Preliminaries} \label{SPrelim}

\subsection{Strong sub-regularity} \label{SS_SsR}

Here we introduce a notion which extends the property of  
{\em Strong (metric) sub-Regularity} (Ss-R)  (see, e.g., \cite[Chapter 3.9 ]{AD+TR-book2}
and the recent paper \cite{Cibulka+Dontchev+Kruger-18}). 
Namely, we consider a general metric space $\Y$ in which two metrics are defined: $d$ and $d^*$, and another metric space
$\Z$ with a metric $d_\Z$. We denote by $\B(y;\al)$ the closed ball with radius $\al \geq 0$ in $(\Y,d)$ centered at $y$,
by $\B_*(y;\al)$ the ball with radius $\al \geq 0$ in $(\Y,d^*)$, and similarly, $\B_\Z(z;\al)$ is the respective ball in $\Z$.

\begin{definition} \label{DSMsR}
A set-valued map $\Phi : \Y \To \Z$ 
is called Ss-R at $(\hat y, \hat z) \in \Y \times \Z$ (with respect to the metrics $d$ and $d^*$) 
if $\hat z \in \Phi(\hat y)$ and there are positive constants $\alpha,\beta$  and $\kappa$ 
(called further parameters of Ss-R) such that 
\bd
    d^*(y,\hat y) \, \leq \, \kappa d_\Z(z, \hat z) \quad \mbox{for all } y \in \B(\hat y;\al), \;\; z \in \Phi(y)\cap \B_\Z(\hat z;\beta).
\ed
\end{definition}

The Ss-R property plays a fundamental role in the error analysis of numerical methods. 
It was introduced under this name in \cite{AD+TR-04}, but has also been used under several other names
(see also \cite[Chapter~1]{Klatte+Kummer-2002-book} for the related but stronger property of strong upper regularity). 
A more detailed historical account can be found in \cite[Section 1]{Cibulka+Dontchev+Kruger-18}. 
The extension with two metrics in $\Y$, presented above, is essential for the applications in the present paper.

The following simple claim is a modification of \cite[Theorem 2.1]{Cibulka+Dontchev+Kruger-18} 
for the case of two metrics in $Y$.

\begin{prop} \label{P_LG}
Assume that $\Z$ is a linear space and $d_\Z$ is a shift-invariant metric in $\Z$. Assume, in addition, 
there exists a number $\gamma > 0$ such that $d(y_1,y_2) \leq \gamma d^*(y_1,y_2)$ for every $y_1, y_2 \in \Y$. 
Let $\Phi  : \Y \To \Z$ be Ss-R at $(\hat y, \hat z)$ with parameters $\alpha',\beta'$ and $\kappa'$. 
Let the positive numbers $\epsilon,\mu, \kappa, \al, \beta$ satisfy the relations
\be \label{Econst_sub}
   \al \leq \al', \quad \beta+\mu\alpha+\epsilon\le\beta',  \quad   \mu \gamma \kappa' < 1, 
           \quad  \kappa = \frac{\kappa'}{1-\mu \gamma \kappa'}.
\ee
Then for every function $\ph : \Y \to \Z$ that satisfies the conditions
\bd
   d_{\Z}(\ph(\hat y),0)\le\epsilon, \quad d_\Z(\ph(y),\ph(\hat y)) \leq \mu \,d(y,\hat y)  \quad \forall y \in \B(\hat y; \al),
\ed
the map $\ph + \Phi$ is strongly sub-regular at $(\hat y, \hat z + \ph(\hat y))$ with parameters $\alpha,\beta$ and $\kappa$. 
\end{prop}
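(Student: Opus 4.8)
The plan is to verify the defining inequality of Ss-R for the perturbed map $\ph+\Phi$ directly, by reducing to the Ss-R property already assumed for $\Phi$. Fix $y\in\B(\hat y;\al)$ and $w\in(\ph+\Phi)(y)\cap\B_\Z(\hat z+\ph(\hat y);\beta)$. Writing $z:=w-\ph(y)$, we have $z\in\Phi(y)$, and the goal is to bound $d^*(y,\hat y)$ by $\kappa\,d_\Z(w,\hat z+\ph(\hat y))$. First I would check that $z$ lies in the ball $\B_\Z(\hat z;\beta')$ on which the Ss-R estimate for $\Phi$ is valid. Using shift-invariance of $d_\Z$ and the triangle inequality,
\bd
 d_\Z(z,\hat z)\le d_\Z\big(w-\ph(y),\hat z+\ph(\hat y)-\ph(y)\big)+d_\Z\big(\hat z+\ph(\hat y)-\ph(y),\hat z\big)
 \le d_\Z\big(w,\hat z+\ph(\hat y)\big)+d_\Z\big(\ph(y),\ph(\hat y)\big),
\ed
and then the two hypotheses on $\ph$ together with $d_\Z(w,\hat z+\ph(\hat y))\le\beta$ and $d(y,\hat y)\le\al\le\al'$ give $d_\Z(z,\hat z)\le\beta+\mu\alpha\le\beta'-\epsilon\le\beta'$. (I would be a little careful here: the perturbation bound on $\ph$ is only assumed on $\B(\hat y;\al)$, which is exactly where $y$ lives, so this is legitimate; the term $\epsilon$ actually gets used in the variant where one also wants room to compare with $\ph(\hat y)\neq 0$, and I'd keep track of whether $\epsilon$ or $\beta+\mu\alpha\le\beta'$ is the operative bound.)

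Next, apply Ss-R of $\Phi$ at $(\hat y,\hat z)$: since $y\in\B(\hat y;\al')$ and $z\in\Phi(y)\cap\B_\Z(\hat z;\beta')$, we get $d^*(y,\hat y)\le\kappa' d_\Z(z,\hat z)$. Now expand $d_\Z(z,\hat z)$ again with shift-invariance and the triangle inequality, this time peeling off $\ph(y)-\ph(\hat y)$ and using the Lipschitz-type bound on $\ph$ in the $d$-metric:
\bd
 d_\Z(z,\hat z)\le d_\Z\big(w,\hat z+\ph(\hat y)\big)+d_\Z\big(\ph(y),\ph(\hat y)\big)\le d_\Z\big(w,\hat z+\ph(\hat y)\big)+\mu\,d(y,\hat y)\le d_\Z\big(w,\hat z+\ph(\hat y)\big)+\mu\gamma\,d^*(y,\hat y),
\ed
where the last step invokes the comparison $d\le\gamma d^*$. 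Combining, $d^*(y,\hat y)\le\kappa'\,d_\Z(w,\hat z+\ph(\hat y))+\kappa'\mu\gamma\,d^*(y,\hat y)$.

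Finally I would absorb the $d^*(y,\hat y)$ term on the left, which is exactly where the condition $\mu\gamma\kappa'<1$ enters: rearranging yields $(1-\mu\gamma\kappa')\,d^*(y,\hat y)\le\kappa'\,d_\Z(w,\hat z+\ph(\hat y))$, hence $d^*(y,\hat y)\le\frac{\kappa'}{1-\mu\gamma\kappa'}\,d_\Z(w,\hat z+\ph(\hat y))=\kappa\,d_\Z(w,\hat z+\ph(\hat y))$, which is the claimed Ss-R inequality for $\ph+\Phi$ with parameters $\alpha,\beta,\kappa$. One should also note $\hat z+\ph(\hat y)\in(\ph+\Phi)(\hat y)$, so the base point is admissible. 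The only genuinely delicate point is the bookkeeping in the first step — making sure the intermediate point $z$ stays within the region where $\Phi$'s sub-regularity applies, and that the perturbation estimates for $\ph$ are only ever used at points of $\B(\hat y;\al)$; everything else is a routine triangle-inequality-and-absorb argument, standard for Lyusternik–Graves-type perturbation results but here carried out with the two-metric structure kept explicit.
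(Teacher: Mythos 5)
Your proof is correct and follows essentially the same route as the paper's: pass to $z=w-\ph(y)\in\Phi(y)$, check via shift-invariance and the triangle inequality that $z$ stays in $\B_\Z(\hat z;\beta')$, apply the Ss-R of $\Phi$, peel off $d_\Z(\ph(y),\ph(\hat y))\le\mu\,d(y,\hat y)\le\mu\gamma\,d^*(y,\hat y)$, and absorb the resulting $\mu\gamma\kappa'\,d^*(y,\hat y)$ term using $\mu\gamma\kappa'<1$. The only (harmless) difference is that you center the test ball at $\hat z+\ph(\hat y)$, as the definition of Ss-R at $(\hat y,\hat z+\ph(\hat y))$ requires, whereas the paper's proof takes $z\in\B_\Z(\hat z;\beta)$ and spends the $\epsilon$-slack there; both versions fit within the assumed margin $\beta+\mu\al+\epsilon\le\beta'$.
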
 

\begin{proof}{}
Obviously, $(\hat y, \hat z + \ph(\hat y)) \in \gr(\ph + \Phi)$. 
Let us fix arbitrarily $z \in \B_\Z(\hat z; \beta)$ and $y \in \B(\hat y;\al) \st \B(\hat y;\al')$ 
such that $z \in \ph(y) + \Phi(y)$.
Then $z - \ph(y) \in \Phi(y)$, and 
\begin{align*}
	d_\Z(z - \ph(y),\hat z)&\le d_\Z(z,\hat z)+d_\Z(\ph(y),\ph(\hat y))+d_\Z(\ph(\hat y),0)\\
	&\le\beta+ \mu\alpha+\epsilon\le \beta'.
\end{align*}
Due to the Ss-R property of $\Phi$ we estimate
\bda
      d^*(y,\hat y) & \leq &\kappa'  d_\Z(z - \ph(y), \hat z) 
          \leq \kappa' d_\Z(z,\hat z + \ph(\hat y))  + \kappa' d_\Z(\ph(y), \ph(\hat y)) \\
    &\leq& \kappa' d_\Z(z,\hat z + \ph(\hat y)) + \kappa' \mu \, d(y, \hat y) \leq 
            \kappa' d_\Z(z,\hat z + \ph(\hat y)) + \kappa' \mu \gamma\, d^*(y, \hat y) ,
\eda
which implies the claim of the theorem due to the definition of $\kappa$ in (\ref{Econst_sub}).
\end{proof}

\subsection{The optimality map} \label{SSOptMap}

Problem $\p_p(0,x_0)$ given by (\ref{EOF})--(\ref{Ex}) will be considered under the following assumptions.

\bino
{\bf Assumption (A1)}. The set $U$ is convex and compact. The functions $f,g$ and $g_T$ are two times differentiable 
with respect to $(x,u)$, these functions and their first and second derivatives in $(x,u)$ are Lipschitz continuous 
(with respect to $(p,x,u)$) with a Lipschitz constant $L$. 

\bino
For any $p \in \Pi$, along with problem $\p_p(0,x_0)$ we consider the family, denoted by $\p_p(\tau,x_\tau)$, 
consisting of problems which have the same form as (\ref{EOF})--(\ref{Ex}) but  with the initial time $0$ 
replaced with any $\tau \in [0,T)$ and $x_0$ replaced with any $x_\tau \in \R^n$. 
Of course, then only the restriction of the parameter $p$ to $[\tau,T]$ matters. 

\bino
{\bf Assumption (A2)}. 
For every $u \in \U$, $x_0 \in \R^n$, and $p \in \varPi$ equation (\ref{Ex}) has a solution $x$ on $[0,T]$ 
(which is then unique due to Assumption (A1)).
For every $\tau \in [0,T)$, $x_\tau \in \R^n$ and $p \in \varPi$  problem $\p_p(\tau,x_\tau)$ has an optimal solution. 

\bino
Since the analysis in this paper is local, only local Lipschitz continuity of the functions mentioned in Assumption (A1) 
is needed; we assume global Lipschitz continuity to avoid routine technicalities.
Assumption (A2) is also stronger than necessary, again for the sake of transparency. Local existence around a reference
parameter and trajectory (to be introduced later) suffices.

\begin{rem} \label{Rsol}{\em
Optimality in the last assumption means local optimality of the objective functional with respect to the $L^1$-norm
of the controls.  
In fact, it is only needed that any solution $(x,u)$ of $\p_p(\tau,x_\tau)$ satisfies, together with
an absolutely continuous ({\em adjoint}) function $\lambda : [\tau, T] \to \R^n$, the {\em optimality (Pontryagin) system}
}\end{rem}
\vspace{-0.5cm}
 \bea
        0 &=&   - \dot x(t) + f(p(t),x(t), u(t)), \quad x(\tau) - x_\tau = 0,   \label{EPx}\\
      0 &=&  \dot \lambda (t) + \nabla_{\!\! x} H(p(t),x(t),\lambda(t),u(t)), \quad \lambda(T) = \nabla g_T(x(T)),  \label{EPlambda} \\
        0 &\in& \nabla_{\!\! u} H(p(t),x(t),\lambda(t),u(t)) + N_U(u(t)),   \label{EPu}  
\eea
where 
\bd
     N_U(u) := \left\{ \begin{array}{ll} \{q \in \R^n  \mid {\langle q, v-u \rangle} \leq 0
      {\text{ for all } v \in U}\} & {\text{if } u \in U,}\\
             \emptyset & \text{if } u \notin U  \end{array}\right.
\ed
is the normal cone to $U$ at $u$, and the Hamiltonian $H$ is defined as usual:
\bd
                 H(p,x,\lambda,u) := g(p,x,u) + \lll \lambda, f(p,x,u) \rrr.
\ed

Next, we reformulate the optimality system in functional spaces. 
The space $L^q(\tau,T)$, $q =1, 2, \ldots, \infty$, of vector functions on $[\tau,T]$ (with any fixed dimension)
has the usual meaning, with the norm denoted by $\| \cdot \|_q$.  The space of all absolutely continuous vector functions
on $[\tau,T]$  is denoted by $W^{1,1}(\tau,T)$, with the norm $\| x \|_{1,1} = \| x \|_1 + \| \dot x \|_1$.
The notations of norms do not include the time horizon, but it will be clear from the context.
For the same reason we often skip the time horizon from the notations of spaces.

Denote
\bd
       Y_\tau := W^{1,1}(\tau, T) \times W^{1,1}(\tau,T) \times \U_\tau, 
          \qquad Z_\tau := L^1(\tau,T) \times \R^n \times L^1(\tau,T)\times\mathbb R^n\times L^\infty(\tau,T),    
\ed
where $\U_\tau = \{ u \in L^\infty(\tau,T) \sth u(t) \in U \mbox{ for a.e. } t \in [\tau,T]\}$ is the set of admissible control functions 
on $[\tau,T]$, thus $\U_0 = \U$. We also set $Y := Y_0$ and $Z := Z_0$.
The metrics in $Y_\tau$ and $Z_\tau$
are given in terms of norms as follows: for $y = (x,\lambda,u) \in Y_\tau$ and $z = (\xi, \nu, \eta,\pi, \rho) \in Z_\tau$
\bd
       d(y,0) := \| y \| := \| x \|_{1,1} + \| \lambda \|_{1,1} + \| u \|_1, \qquad
        d_Z(z,0) := \| z \|  := \| \xi \|_{1} + | \nu | + \| \eta \|_{1} +  |\pi|+ \| \rho \|_\infty.
\ed
In addition, we define in $Y$ a second metric, $d^*$, as follows. 
Let $\Gamma \st [0,T]$ be a fixed finite set.
For $u_1, u_2 \in \U_\tau$, denote 
\be\label{EGam0}
    d^*(u_1,u_2) := \inf \{ \e > 0 \sth |u_1(t) - u_2(t) | \leq \e \; 
    \mbox{ for a.e. } \; t \in [0,T] \sm (\Gamma + [-\e,\e]) \}.
\ee 
Somewhat overloading the notation, we define in $Y$ the shift-invariant metric
\bd
          d^*(y,0) := \| x \|_{1,1} + \| \lambda \|_{1,1} + d^*(u,0).
\ed

\begin{lem} \label{L_d<d}
For every $u_1, u_2 \in \U$ it holds that
\bd
    \| u_1 - u_2 \|_1 \leq \gamma \; d^*(u_1,u_2), 
\ed
where $\gamma := \max\{ 1, T+2M \, {\rm diam} \,(U)\}$ and $M$ is the number of points in $\Gamma$ . 
\end{lem}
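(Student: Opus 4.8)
The plan is to estimate the $L^1$–distance by splitting $[0,T]$ into the part where $u_1$ and $u_2$ are uniformly close and a small ``bad'' part near the finite set $\Gamma$, on which one only controls $|u_1(t)-u_2(t)|$ by ${\rm diam}(U)$.

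First I would observe that $d^*(u_1,u_2)$ is finite: since $U$ is compact by Assumption~(A1), one has $|u_1(t)-u_2(t)| \le {\rm diam}(U)$ for a.e.\ $t$, so $\e = {\rm diam}(U)$ is admissible in \eqref{EGam0}. Set $\delta := d^*(u_1,u_2)$. The next point is a monotonicity remark: if the defining inequality in \eqref{EGam0} holds for some $\e'$, then it holds for every $\e \ge \e'$ as well, because enlarging $\e$ shrinks the exceptional set $\Gamma+[-\e,\e]$ while relaxing the bound from $\e'$ to $\e$. Consequently, for every $\e > \delta$ we have $|u_1(t)-u_2(t)| \le \e$ for a.e.\ $t \in [0,T]\setminus(\Gamma+[-\e,\e])$.

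Then I would fix such an $\e$ and split
\[
  \|u_1-u_2\|_1 = \int_{[0,T]\setminus(\Gamma+[-\e,\e])} |u_1-u_2|\,\dd t + \int_{[0,T]\cap(\Gamma+[-\e,\e])} |u_1-u_2|\,\dd t .
\]
On the first set the integrand is at most $\e$ and the set has measure at most $T$, which gives the bound $\e T$. For the second term the integrand is at most ${\rm diam}(U)$, while $\Gamma+[-\e,\e]$ is a union of at most $M$ intervals of length $2\e$, hence has measure at most $2M\e$; this yields the bound $2M\e\,{\rm diam}(U)$. Adding the two estimates gives $\|u_1-u_2\|_1 \le \e\,(T+2M\,{\rm diam}(U))$. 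Letting $\e \downarrow \delta$ and recalling that $\gamma = \max\{1,\, T+2M\,{\rm diam}(U)\} \ge T+2M\,{\rm diam}(U)$ concludes the argument (the case $\delta=0$ being included by taking $\e\to 0^+$).

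There is essentially no serious obstacle here; the only points requiring a little care are the finiteness of $d^*$ and the monotonicity of the condition in \eqref{EGam0} with respect to the parameter $\e$ (so that the infimum may be approached from above), together with the elementary measure estimate ${\rm meas}(\Gamma+[-\e,\e]) \le 2M\e$. The enlargement to $\gamma = \max\{1,\cdot\}$ is irrelevant for this lemma itself; it is made so that the same $\gamma$ serves in Proposition~\ref{P_LG}, where the $W^{1,1}$–components of $y$ enter $d$ and $d^*$ identically and therefore contribute the constant $1$.
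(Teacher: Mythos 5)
Your proof is correct; the paper itself omits the argument (stating only that it is straightforward), and your decomposition of $[0,T]$ into the set $\Gamma+[-\e,\e]$ of measure at most $2M\e$ and its complement, followed by letting $\e$ decrease to $d^*(u_1,u_2)$, is exactly the intended argument. The care you take with finiteness of $d^*$ and monotonicity in $\e$ is appropriate and the edge case $d^*(u_1,u_2)=0$ is handled properly.
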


The proof is straightforward. 
Since $\gamma \geq 1$, we also have $\| y \| \leq \gamma \, d^*_\tau(y,0)$ for any $y \in Y_\tau$.

Below we use the same notation for the Nemytskii operator and for its generating function:
$f(p,x,u)(t) = f(p(t),x(t),u(t)$, $\nabla_{\!\! x} H(p,x,\lambda,u)(t) = \nabla_{\!\! x} H(p(t),x(t),\lambda(t),u(t))$, etc.
For any $p \in \varPi$, $\tau \in [0,T)$, and $x_\tau \in \R^n$ define on $Y_\tau$ the set-valued map
\bd 
   \Phi_{(p,\tau,x_\tau)}(y) = F_{(p,\tau,x_\tau)}(y) + \left( \begin{array}{c}
                                                      0 \\
                                                       0 \\
                                                      0 \\
                                                      0\\
                                                   N_{\U_\tau}(u) 
                                 \end{array} \right),
                \qquad  F_{(p,\tau,x_\tau)}(y) = \left( \begin{array}{c}
                                            - \dot x + f(p,x,u)   \\
                                            x(\tau) - x_\tau \\
                                             \dot \lambda + \nabla_{\!\! x} H(p,x,\lambda,u) \\
                                             \lambda(T)-\nabla g_T(x(T))\\
                                        \nabla_{\!\!u} H(p,x,\lambda,u) 
                                         \end{array} \right),
\ed
where now $N_{\U_\tau}(u)$ is the normal cone to $\U_\tau$ at $u$ in the space $L^1(\tau, T)$, that is,
\bda
     N_{\U_\tau}(u) &:=& \left\{ \begin{array}{ll} \{l \in L^\infty(\tau,T)  \sth \int_\tau^T \lll l(t), v(t) -u(t) \rrr \dd t 
     \leq 0 {\text{ for all } v \in \U_\tau}\} & {\text{if } u \in \U_\tau,}\\
             \emptyset & \text{if } u \notin \U_\tau\end{array}\right. \\
    && = \{ l \in L^\infty(\tau,T) \sth l(t) \in N_U(u(t)) \mbox{ for a.e. } t \in [\tau, T] \}.
\eda
With these notations one can recast the optimality system for problem  $\p_p(\tau,x_\tau)$ as
\bd
          0 \in \Phi_{(p,\tau,x_\tau)}(x,\lambda,u),
\ed 
therefore the map $\Phi_{(p,\tau, x_\tau)}$ is called {\em optimality map}. Obviously, due to the compactness 
of the set $U$,  $\Phi_{(p,\tau,x_\tau)}$ is a set-valued map from  $Y_\tau$ to $Z_\tau$.

\bino
Let us fix a reference parameter $\hat p \in \varPi$ and denote by $(\hat x, \hat u)$ a solution of problem
$\p_{\hat p}(0,x_0)$ (see Remark~\ref{Rsol}). Let $\hat \lambda$ be the corresponding adjoint function, 
so that the triplet $\hat y := (\hat x, \hat \lambda, \hat u)$ satisfies the optimality system (\ref{EPx})--(\ref{EPu})
with $\tau = 0$ and $x_\tau = x_0$,  or equivalently, the inclusion $0 \in \Phi_{(\hat p,0,x_0)}(\hat y)$.
The following assumption plays a key role in the error analysis of the MPC method presented in the next section. 

\bino
{\bf Assumption (A3).} The map $\Phi_{(\hat p,0,x_0)} : Y \To Z$ is strongly sub-regular 
at $(\hat y,0)$ (in the metrics $\| \cdot \|$ and $d^*$ in $Y$) with parameters $\hat \al,\hat \beta$ and $\hat \kappa$.

\bino
The finite set $\Gamma$ that appears in the definition of the metric $d^*$ is arbitrary, but will be appropriately 
specified in Section \ref{S_conditions} for several classes of problems, together with sufficient conditions for (A3).

\section{The accuracy of the model predictive control method} \label{S_MPC}

This section presents the main result: the estimate of the accuracy of the model predictive control method,
beginning with the description of the method in the context of finite horizon optimal control.

\subsection{The model predictive control method} \label{SS_MPC}

The MPC, applied to optimal control problems containing uncertain parameters, is a method 
for approximation of an optimal feedback control in real time by successively solving open-loop 
optimal control problems. Each of these open-loop problems involve measurements of the current system
state and predictions for the uncertain parameters. In the next three paragraphs we present a version of the MPC method.

The optimal control problem into question is problem $\p_p(0,x_0)$, considered under Assumptions (A1)--(A3),
where the parameter function $p \in \varPi$ is uncertain.
It is assumed that for some parameter $\hat p \in \varPi$ equation (\ref{Ex}) with $p = \hat p$ 
reproduces a ``real" system, the states of which can be measured (with a measurement
errors). As in the previous subsections we denote by $(\hat x, \hat u)$ a reference optimal solution 
of $\p_{\hat p}(0,x_0)$. 

Given a natural number $N$, we denote by $\{t_k\}_{k=0}^N$ the grid with step-size $h=T/N$, that is, $t_k=kh$, $k=0,\ldots,N$. The MPC algorithm generates in real time an admissible control function, denoted further by $u^N$. It is applied to the ``real'' system, that is, \eqref{Ex} with $p=\hat p$, resulting in a ``real'' trajectory $x^N$.

At time $t_k$, $k=0,\ldots,N-1$, the algorithm does the following.
\begin{enumerate}
\item Measure the state $x^N(t_k)$ with error $e_k$, that is, the vector $x_k^0=x^N(t_k)+e_k$ becomes available.
\item Make a prediction $p_k\in\Pi$ for the time horizon $[t_k,T]$.
\item Find an approximate solution $(\tilde x_k,\tilde u_k)\in W^{1,1}\times \U_{t_k}$ of the problem $\mathcal P_{p_k}(t_k,x_k^0)$.
\item Define the control $u^N$ as $u^N(t)=\tilde u_k(t)$ on $(t_k,t_{k+1}]$ and apply to the ``real'' system on this interval.
\end{enumerate}

The process continues in the same way as long as $k < N$. The control $u^N$ is called {\em MPC-generated control}
and the corresponding trajectory $x^N$ of the ``real" system (\ref{Ex}) with $u = u^N$ and $p = \hat p$
is called {\em MPC-generated trajectory}.

\bino
Two points are to be clarified. First, the quality of a prediction $p_k \in \varPi$ on $[t_k,T]$ will be measured by the 
norm $e_k^p :=  \| p_k - \hat p_{[t_k,T]}\|_\infty$. Second, the pair $(\tilde x_k, \tilde u_k)$ is an approximate solution  of
problem $\p_{p_k}(t_k, x_k^0)$ in the sense that for some absolutely continuous $\tilde \lambda_k$
the triplet $\tilde y_k := (\tilde x_k, \tilde \lambda_k, \tilde u_k)$
satisfies the inclusion (approximate optimality conditions) 
\be \label{LRrez}
          \tilde z_k \in \Phi_{(p_k,t_k, x_k^0)}(\tilde y_k)
\ee
with some $\tilde z_k \in Z_\tau$. 
We mention that most of the numerical methods for optimal control give approximations 
with a small residual $\tilde z_k$. The norm $e_k^{u} := \| \tilde z_k \|$ of the residual will be used 
as a measure of the accuracy of the approximate solution  $(\tilde x_k,\tilde u_k)$ of problem $\p_{p_k}(t_k, x_k^0)$.  
 
\subsection{The main theorem} \label{SS_main}

The formulation of the main theorem uses the notations $e_k$, $e^p_k$, $e^{u}_k$, $\tilde u_k$, $u^N$ and 
$x^N$ introduced in the description of the MPC algorithm. In particular, $(x^N, u^N)$ is the MPC-generated
trajectory-control pair, which is compared in the next theorem with the reference optimal open-loop solution $(\hat x, \hat u)$
of the ``real'' problem $\p_{\hat p}(0,x_0)$.

\begin{theo} \label{Tmain}
Let Assumptions (A1)--(A3) be fulfilled. 
Then there exists numbers $N_0$, $\delta > 0$, $C_1$, $C_2$, and $C_3$ such that 
for any natural number $N \geq N_0$, for any sequence of measurement errors $\{e_k\}$, 
for any sequence of predictions $p_k \in \varPi$ and approximation errors $\{e_k^{u}\}$ satisfying the conditions
\bd
        | e_k | + e_k^p + e_k^{u} \leq \delta, \qquad 
           \| \tilde u_k - \hat u\|_1 \leq \delta, \quad k= 0, \ldots, N-1,
\ed 
any MPC-generated trajectory-control pair $(x^N, u^N)$ satisfies the estimate  
\bd
        \| u^N - \hat u \|_1  +  \| x^N - \hat x \|_{1,1}  \, \leq \, \left\{ \begin{array}{cl} 
                         C_1{\cal E} & \mbox{ if  } \; \Gamma = \emptyset, \\
                         C_2  \sqrt{ \cal E} + C_3 h & \mbox{ if  } \; \Gamma \not= \not\emptyset,
             \end{array} \right.
\ed
where 
\bd
        {\cal E} := \frac{1}{N} \sum_{k=0}^{N-1} (|e_k| + e^{p}_k + e^{u}_k) 
\ed 
is the averaged error appearing at the MPC steps.
\end{theo}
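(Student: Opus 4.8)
The core idea is to track the error propagation across the MPC steps by applying the strong sub-regularity assumption (A3), suitably transported along the reference trajectory, to each of the auxiliary problems $\p_{p_k}(t_k,x_k^0)$. The plan is as follows. First I would establish a ``restriction'' version of Assumption (A3): from strong sub-regularity of $\Phi_{(\hat p,0,x_0)}$ at $(\hat y,0)$ on the whole horizon $[0,T]$, derive strong sub-regularity of the optimality maps $\Phi_{(\hat p,t_k,\hat x(t_k))}$ at the restricted reference triplet $\hat y_{[t_k,T]}$, with parameters that can be chosen uniformly in $k$ (and uniformly for $t_k$ ranging over the grid). The natural way to do this is to show that a solution of the optimality system on $[t_k,T]$, if extended by the reference solution on $[0,t_k]$, nearly solves the optimality system on $[0,T]$ with a small residual controlled by the mismatch at $t_k$; then invoke (A3). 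Here the special metric $d^*$ and Lemma~\ref{L_d<d} are essential because the control restrictions need not be close in $L^\infty$, only in the weaker $d^*$ sense, and $d^*$ on a subinterval is dominated by $d^*$ on the whole interval.

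Second, I would combine this restricted sub-regularity with Proposition~\ref{P_LG} to absorb the perturbations present in the $k$-th auxiliary problem: the parameter change $p_k$ versus $\hat p$ (contributing $\mu$-type Lipschitz perturbations of the Nemytskii operators, with $\mu \lesssim e_k^p$), the change of initial data $x_k^0$ versus $\hat x(t_k)$ (an $\epsilon$-type shift of size $\lesssim |e_k| + \|x^N(t_k)-\hat x(t_k)\|$), and the residual $\tilde z_k$ with $\|\tilde z_k\| = e_k^u$. Applying Proposition~\ref{P_LG} to $\Phi_{(\hat p,t_k,\hat x(t_k))}$ with the perturbation function $\ph$ encoding the $p_k$-dependence yields strong sub-regularity of $\Phi_{(p_k,t_k,\hat x(t_k))}$; then feeding in the residual $\tilde z_k$ and the initial-data shift via the defining inequality of Ss-R gives a bound of the form
\bd
   d^*_{t_k}\big(\tilde y_k,\, \hat y_{[t_k,T]}\big) \;\leq\; \hat\kappa'\big(e_k^u + c(|e_k| + \|x^N(t_k)-\hat x(t_k)\|) + c\,e_k^p\big),
\ed
provided $(\tilde x_k,\tilde u_k)$ stays in the relevant neighborhoods — which is where the hypotheses $|e_k|+e_k^p+e_k^u\le\delta$ and $\|\tilde u_k-\hat u\|_1\le\delta$ enter, together with $N\ge N_0$ (i.e. $h$ small) to keep $\|x^N(t_k)-\hat x(t_k)\|$ small by induction.

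Third, I would set up the recursion for the ``real'' trajectory error. On $(t_k,t_{k+1}]$ the control is $\tilde u_k$ and the dynamics is the true one ($p=\hat p$); comparing $\dot x^N = f(\hat p,x^N,\tilde u_k)$ with $\dot{\hat x} = f(\hat p,\hat x,\hat u)$ and using Gr\"onwall, the increment of $\|x^N-\hat x\|$ over one step is controlled by $\|x^N(t_k)-\hat x(t_k)\|$ plus $\int_{t_k}^{t_{k+1}}|\tilde u_k-\hat u|\,dt$, and by Lemma~\ref{L_d<d} the latter is $\le \gamma\,d^*_{t_k}(\tilde u_k,\hat u)$ — but crucially, when $\Gamma=\emptyset$ the factor $\gamma$ is just $T$ and $d^*$ coincides (up to constants) with the $L^1$ metric, whereas when $\Gamma\ne\emptyset$ there is an extra $O(h)$ contribution per relevant grid point coming from the $\Gamma+[-\e,\e]$ exclusion set, which after summing over steps accounts for the additive $C_3 h$ term. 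Denoting $a_k := \|x^N(t_k)-\hat x(t_k)\|$, one obtains a linear recursion $a_{k+1} \le (1+Ch)a_k + C\big(e_k^u + |e_k| + e_k^p\big)\cdot(\text{something}) + C h^2$ (in the $\Gamma=\emptyset$ case) and a square-root-type coupling otherwise; a discrete Gr\"onwall summation then gives $\max_k a_k \lesssim \sum_k(|e_k|+e_k^p+e_k^u) + h = N{\cal E} + h$ or its square-root analogue. Feeding this back into the per-step sub-regularity bounds and summing $\|u^N-\hat u\|_1 = \sum_k\|\tilde u_k-\hat u\|_{L^1(t_k,t_{k+1})}$ yields the final estimate, with the dichotomy $C_1{\cal E}$ versus $C_2\sqrt{\cal E}+C_3 h$ arising precisely from whether $d^*$ is $L^1$-equivalent ($\Gamma=\emptyset$) or only controls $L^1$ with the weaker, $h$-contaminated bound.

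\textbf{Main obstacle.} The delicate point is the bookkeeping with the two metrics: the sub-regularity estimate is in $d^*$, but the trajectory recursion and the final claim are in $L^1$/$W^{1,1}$, and passing between them costs the factor $\gamma$ and, when $\Gamma\ne\emptyset$, an unavoidable $O(h)$ per step that does \emph{not} shrink with the data errors. One must also verify the uniformity in $k$ of all sub-regularity parameters — in particular that the neighborhoods $\hat\al,\hat\beta$ survive restriction to subintervals and composition with Proposition~\ref{P_LG} without degenerating — and close the induction that keeps $(\tilde x_k,\tilde u_k,\tilde\lambda_k)$ inside those neighborhoods, which requires choosing $\delta$ and $N_0$ in the right order (first fix the sub-regularity data, then $\delta$ small relative to them, then $N_0$ large). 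This interlocking of constant choices, together with the non-standard metric, is what makes the argument ``non-trivial'' as the authors warn.
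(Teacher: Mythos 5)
Your overall architecture matches the paper's: transport the sub-regularity assumption (A3) from the full horizon to the subinterval problems by extending a candidate solution of $\p_{p_k}(t_k,x_k^0)$ backwards by the reference solution on $[0,t_k)$ and measuring the residual of the extension in $\Phi_{(\hat p,0,x_0)}$ (the paper does exactly this in Proposition \ref{Pmain_error}, and explicitly notes your alternative route via restriction plus Proposition \ref{P_LG}); obtain the per-step bound $d^*_{t_k}(\tu_k,\hu)\le c_0(|e_k|+e_k^p+e_k^u)$; then run a Gr\"onwall recursion for $\Delta_k=|x^N(t_k)-\hat x(t_k)|$ coupled with an induction that keeps the iterates inside the sub-regularity neighborhoods. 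Up to that point the proposal is sound.

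The genuine gap is in the step you label ``square-root-type coupling'': you never supply a mechanism that produces $\sqrt{\E}$, and the accounting you sketch does not yield it. The quantity to control is $h\sum_i d_i$ with $d_i=\|\tu_i-\hu\|_{L^\infty(t_i,t_{i+1})}$. On intervals $(t_i,t_{i+1})$ that meet the exclusion zone $\Gamma+[-d^*_{t_i}(\tu_i,\hu),\,d^*_{t_i}(\tu_i,\hu)]$ the only available bound is $d_i\le D:={\rm diam}(U)$, so everything hinges on counting the number $s$ of such ``bad'' intervals. The trivial count --- at most $M(2+2c_0\delta/h)$ intervals can meet a zone of half-width $c_0\max_i\E_i\le c_0\delta$ --- gives $h\,s\,D\lesssim Mh+MD c_0\delta$, a constant that does not shrink with $\E$; and your alternative bookkeeping of ``$O(h)$ per relevant grid point, summed over steps'' produces $O(1)$ after summing over $N=T/h$ steps, not an $O(h)+O(\sqrt{\E})$ remainder. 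The paper's key observation is that an interval at grid-distance $m$ from $\Gamma$ can be bad only if its \emph{individual} error satisfies $c_0\E_i\ge h(m-1)$; since at most $2M$ intervals sit at each grid-distance and $\sum_i\E_i=N\E=T\E/h$, an extremal argument over the occupation numbers $l_j$ yields $s\le 6M+\frac{2}{h}\sqrt{c_0TM\E}$, whence $h\,s\,D\le C_3h+C_2\sqrt{\E}$. This counting argument is the heart of the proof and is absent from your proposal. Two smaller slips: your discrete Gr\"onwall conclusion ``$\max_k a_k\lesssim N\E+h$'' drops the factor $h$ weighting the per-step control terms (the good-index contribution is $h\sum_i c_0\E_i=c_0T\E$, whereas $N\E=T\E/h$ is not small), and when $\Gamma=\emptyset$ the metric $d^*$ is the $L^\infty$ metric, not ``$L^1$ up to constants''.
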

 
\bino
The proof of the theorem is postponed to Section \ref{Sproof}.
Below in this subsection we discuss the obtained result and the assumptions.

\begin{rem} \label{R_A3} About Assumption (A3). {\em
Assumptions (A1) and (A2) are standard and non-restrictive, although somewhat stronger than necessary, as 
noted after their formulation. Assumption (A3) has to be explained. 

First of all, what is the finite set $\Gamma$ in the definition of the metric $d^*$ which is involved in (A3) through
the definition of strong sub-regularity? 
This set may depend on the reference optimal control $\hat u$ of the unperturbed problem $\p_{\hat p}(0, x_0)$.
Presumably, it consists of points of discontinuity of $\hat u$, but may be larger in order to include points at which 
the optimal control of a slightly disturbed problem may be discontinuous. Example \ref{Example} below illustrates 
this situation. The meaning of the metric $d^*$ is that the distance between two control functions is small in this
metric when their values are close to each other, possibly excepting points that are close to the set $\Gamma$. 
This property of the metric with which the Ss-R assumption (A3) is fulfilled is of key importance for that 
convergence and error analysis of the MPC method.

In the next section we shall provide sufficient conditions under which Assumption (A3) is fulfilled in particular classes of
problems with empty or non-empty set $\Gamma$. 
}\end{rem}

\begin{rem} \label{R_theo} Discussion on the theorem. {\em
Theorem \ref{Tmain} estimates the error of the MPC-generated solution, compared
with the optimal solution of the reference (unperturbed) problem, caused 
by prediction errors $\{e^p_k\}_k$, measurement errors $\{e_k\}_k$,
approximation errors $\{ e^{u}_k \}_k$, and the sampling size $h$.
An important point is that the error estimate in the theorem depends on the average error, $\E$,
which means that relatively large errors may occasionally appear at some MPC steps
without a substantial influence on $\E$. 

A similar result as in Theorem \ref{Tmain} is obtained in \cite{AD+IK+MK+VV-20} in the case $\Gamma = \emptyset$
(see Subsection \ref{SS_coerc} of the present paper). Here we mention that in \cite{AD+IK+MK+VV-20} 
a prediction made at the beginning is only used, that is, $p_k = p_0$ for all $k$. Moreover, the result in 
\cite{AD+IK+MK+VV-20} only applies to the Euler method for approximate solving the auxiliary problems involved in the MPC
algorithm.
}\end{rem}

\begin{rem} \label{R_approx} About the approximate solution of the auxiliary problems $\p_{p}(\tau, x_\tau)$.
{\em Finding an approximate solution of the auxiliary problems is a separate issue that we do not address in detail in this paper.
Numerical solutions usually involve time-discretization. Discretization methods with first and second order accuracy
are known for coercive problems (see Subsection \ref{SS_coerc} for the last term), \cite{Dont-96,Dont+Hager+Veliov:RK}, 
as well as for affine problems with purely bang-bang optimal controls, \cite{WAlt+UF+MS-18,P+S+Veliov-17}. 
The error in solving the resulting mathematical programming problems comes in addition.
The above mentioned results are proved under assumptions that imply strong sub-regularity 
(for appropriate sets $\Gamma$) of the optimality maps associated with the considered problems.  
}\end{rem}

\begin{example} \label{Example} Sharpness of the estimate. {\em
Consider the problem 
\bd
        \min \{ x^1(1) - x^2(1) \},
\ed
\bda 
             \dot x^1(t) &=& p(t) x^2(t), \quad x^1(0) = 0, \\
             \dot x^2(t)& =& u(t), \qquad\quad\; x^2(0) = 0, \qquad u(t)) \in [-1,1].
\eda
The reference parameter is $\hat p \equiv 1$. The measurements are assumed exact, as well as the solutions of the 
auxiliary problems at the MPC steps. Thus $\E = h \sum_{k=0}^{N-1} \| p_k - \hat p \|_\infty$.

The solution of each problem $\p_p(t_k, \hat x(t_k))$ is straightforward: here 
\bd
         \tilde \lambda_k^2( t ) = -1 + \int_t^1 p_k(s) \dd s, \qquad \tilde u_k ( t) = - {\rm sign }\, \tilde\lambda^2(t).
\ed
For $\hat p$ we have $\hat \lambda^2( t ) < 0$  for all $t \in (0,1]$, hence $\hat u(t) \equiv 1$. 
For $t = 1$ the control function $\hat u$ is not determined
by the Pontryagin necessary optimality condition, because $\hat \lambda^2( 0) = 0$. 
This does not matter from the control perspective, but suggests to define $\Gamma = \{ 0 \}$  (see Remark \ref{R_A3}).
As it will become obvious in the next section, Assumption (A3) is fulfilled for this problem with this $\Gamma$.

Let us fix an arbitrary $\delta > 0$ and consider $h= 1/N$ with $N > 2$ and such that $2h \leq \delta$.
Define $p_0 = 1+ 2h$ and take all other predictions exact: $p_k = 1$, $k = 1, \ldots, N-1$.
Then $\|p_0 - \hat p\|_\infty \leq 2h \leq\delta$. Moreover,  $\E = 2h^2$.

On the other hand, we have
\bd
           \tilde \lambda^2_0(t_{1}) = -1 + (1-h)  (1+2h) = h(1-2h) > 0.
\ed
Since $\tilde \lambda^2_0$ is linear and $\tilde \lambda^2_0(1) = -1$, 
we obtain that $\tilde \lambda^2_0(t) > 0$ on $[0,t_1]$.
Hence, $u^N(t) = \tilde u_0(t) = -1$ on $[0,t_1]$.  Then 
\bd
           \|u^N - \hat u \|_1 \geq 2h.
\ed
Consequently,
\bd
          \frac{ \|u^N - \hat u \|_1}{\sqrt{\E}} \geq \frac{2h}{\sqrt{2}h} \geq \sqrt{2}.
\ed
Since $\E$ can be arbitrarily small (for small $h$), the estimation in the theorem is sharp. 
}\end{example}

\section{Sufficient conditions for strong sub-regularity of the optimality map} \label{S_conditions}

In this section we present some classes of problems for which Assumption (A3) has a more particular form
with a specified set $\Gamma$, thus Theorem \ref{Tmain} is applicable. In addition, we further discuss the 
approximation issue mentioned in Remark \ref{R_approx}.

\subsection{The case of coercive problems} \label{SS_coerc} 

Following \cite{DH-93}, in this subsection we consider the reference problem $\p_{\hat p}(0,x_0)$ 
under the so-called {\em coercivity condition}. To formulate it we use the following notational convention:
we skip arguments of functions with ``hat'', shifting the ``hat'' over the notation of the function, e.g.
$\hat f_x(t) := f_x(\hat p(t), \hat x(t), \hat u(t))$, $\hat H_{xx}( t) := H_{xx}(\hat p(t), \hat x(t), \hat \lambda(t),\hat u(t))$, etc.
Here $f_x$ and $H_{xx}$ are the derivative of $f$ and the Jacobian of $H$, respectively.

\bino
{\bf Assumption (B1).} There is a constant $c_0 > 0$ such for any $v \in \U - \U$ the inequality 
\bd
      \langle g_{T}''(\hat x(T))x(T),x(T)\rangle +   \int_0^T \left[ \lll \hat H_{xx}(t) x(t), x(t)  \rrr  + 2 \lll \hat H_{ux}(t) x(t), v(t)  \rrr + \lll \hat H_{uu}(t) v(t), v(t)  \rrr \right] \dd t  
                \geq  c_0 \| v \|_2^2
\ed 
is fulfilled, where $x$ is the (unique) solution of the equation 
$\dot x(t) = \hat f_x(t) x(t) + \hat f_u(t) v(t)$ with $x(0) = 0$.
 
\bino
It was proved in \cite{DH-93} that Assumption (B1), together with (A1) and (A2), implies (A3)
with $\Gamma = \emptyset$, thus in this case the metric in $\U$ is $d^* = \| \cdot \|_\infty$, 
thus the metric in $Y$ can be taken to be $d^*(y,0) = \| x \|_{1,\infty} + \| \lambda \|_{1,\infty} +  \| u \|_{\infty}$.%
\footnote{%
\label{FnDH} The terminology of metric regularity was not used in \cite{DH-93} and the control system considered was 
stationary, but the result was easily extended to the non-stationary case in many subsequent contributions, see e.g.
\cite{Don+Kra+Vel-18}. }

Even more, Assumptions (A1), (A2), (B1) imply the stronger property of {\em Strong metric Regularity} (SR),
\cite[Sect.~3.7]{AD+TR-98} with respect to the norm $\| x \|_{1,\infty} + \| \lambda \|_{1,\infty} +  \| u \|_{\infty}$ 
in the space $Y$. An important fact is, that the property SR is stable with respect to functional perturbations with a 
sufficiently small Lipschitz constant see, e.g.,  \cite[Proposition 3G.2]{AD+TR-98}). Then it is easy to see that 
for sufficiently small inaccuracies $|e_k|$, $e_k^p$ and $e_k^{u}$ all the maps $\Phi_{(p,t_k,x_k^0)}$ 
that appear in the MPC algorithm are SR, 
hence Ss-R, with constants that can be chosen uniformly with respect to $k$. 
In connection with Remark \ref{R_approx}, we mention that thanks to the strong regularity of the optimality map 
(or the uniform strong sub-regularity)  one can claim $O(h)$ uniform estimation of $e_k^u$ if the Euler discretization
with step size $h$ is used in solving the problems $\p_p(t_k,x_k^0)$ (see \cite{AD+VV:CC-09}), and uniform 
convergence of the Newton method (see \cite{Aragon+...+VV-11}). 
However, this issue is not at the focus of the present paper and we do not give precise formulations and details.

\subsection{The case of affine problems with bang-bang optimal controls} \label{SSb-b} 

In this subsection, we consider problem $\mathcal P_{\hat p}(0,x_0)$ to be affine, i.e., 
the objective integrand, $g$, and the right-hand side, $f$, in (\ref{Ex}) are both affine with respect to $u$.  
The set $U$ is assumed to be a convex compact polyhedron. Using geometric terminology, 
we denote by $V$ the set of vertices  of $U$, and by $E$ the set of all unit vectors $e \in \R^m$ 
that are parallel to some edge of $U$.  
As usual, we define the so-called switching function $\hat\sigma:[0,T]\to\mathbb R^m$ by $\hat\sigma(t):=\hat H_u(t)$. 
Here and further we use the notational convention from  the previous subsection: arguments of functions with 
``hat'' are skipped and the ``hat'' is shifted over the notation of the functions.

Versions of the following assumption are standard in the literature on affine optimal control problems, 
see, e.g., \cite{WAlt+UF+MS-18,CVQ,Fel2003,Os+Ve-20}. 

\bino
{\bf Assumption (C1).}  There exist numbers $\eta_0 > 0$ and $\mu_0>0$ such that if $s \in[0,T]$ is a zero 
of $\langle \hat\sigma, e \rangle$ for some $e \in E$, then 
\begin{align*}
      |\langle \hat\sigma(t), e\rangle|\ge\mu_0|t-s|,
\end{align*}
for all $t\in [s-\eta_0,s+\eta_0] \cap [0,T]$.

\bino
Assumption (C1) implies that $\hat u$ is bang bang and that, in particular, the set 
\bd
         \Gamma:=\left\{ s \in[0,T] \sth \langle\hat \sigma(s),e \rangle=0 \, \text{ for some }\, e\in E \right\}
\ed
is finite. In what follows in this subsection,  the metric $d^*$ in $Y$ is defined through this set $\Gamma$,
see (\ref{EGam0}).
As it will be seen in the proof of Theorem \ref{Tmain}, the advantage of using this metric instead of 
the $L^1$-norm is that $d^*(u,\hat u)$ being small not only implies that $\| u - \hat u\|_1$
is small, but also that $|u(t) - \hat u(t)|$ is small except on a small set around the zeros of the switching functions
$\langle \hat\sigma(t), e\rangle$, $e \in E$. In this sense, $u$ is structurally similar to $\hat u$.

Given $\e \geq 0$, we  denote
\bd
         \Sigma(\e) := [0,T] \sm (\Gamma + [-\e, \e]).
\ed 

We recall the following lemma proved in the recent paper \cite{Sozopol}.

\begin{lem}{\cite[Lemma 2]{Sozopol}} \label{LH3}
Let Assumption (C1) be fulfilled. Then there exist  positive numbers $\kappa$ and $\e$ such that for every functions 
$\sigma \in L^\infty$ with $\|\sigma-\hat\sigma\|_\infty\le \e$ and for every $u \in \U$
satisfying $\sigma(t) + N_U(u(t)) \ni 0$ for a.e. $t \in [0,T]$ it holds that 
\bd
	u(t) = \hat u(t) \;\mbox{  for a.e. } \, t \in \Sigma\big(\kappa \| \sigma - \hat \sigma \|_\infty \big).
\ed
\end{lem}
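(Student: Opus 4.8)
The plan is to reduce the statement to a pointwise analysis of the switching functions and then exploit the structure of the normal cone $N_U$ for a polyhedron. First I would reduce to the reference switching function. Suppose $\sigma \in L^\infty$ satisfies $\|\sigma - \hat\sigma\|_\infty \le \e$ (with $\e$ to be fixed) and $0 \in \sigma(t) + N_U(u(t))$ a.e., i.e.\ $-\sigma(t) \in N_U(u(t))$, which by definition of the normal cone means $u(t)$ maximizes $v \mapsto \langle -\sigma(t), v\rangle$ over $U$, equivalently $u(t) \in \Argmax_{v\in U} \langle -\sigma(t), v\rangle$. Similarly, because $\hat y$ satisfies the Pontryagin system \eqref{EPu}, we have $-\hat\sigma(t) \in N_U(\hat u(t))$, so $\hat u(t) \in \Argmax_{v \in U}\langle -\hat\sigma(t), v\rangle$; since Assumption (C1) forces $\langle\hat\sigma(t),e\rangle \ne 0$ for all $e \in E$ whenever $t \notin \Gamma$, the linear functional $\langle -\hat\sigma(t),\cdot\rangle$ is non-constant along every edge direction of $U$, hence it attains its maximum at a unique vertex, so $\hat u(t) \in V$ is uniquely determined for a.e.\ $t$ (this is exactly where ``bang-bang'' enters).

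Next I would make the key quantitative step: show that if $t$ is far enough from $\Gamma$, namely $t \in \Sigma(\kappa\|\sigma-\hat\sigma\|_\infty)$ for a suitable $\kappa$, then the maximizer of $\langle-\sigma(t),\cdot\rangle$ over $U$ is still the same vertex $\hat u(t)$. For this one uses a standard sensitivity fact for linear programs over a fixed polyhedron: the optimal vertex of $\max_{v\in U}\langle c, v\rangle$ does not change as long as the cost vector $c$ stays in the (relative interior of the) normal cone of that vertex, and the ``robustness radius'' is controlled by how far $c$ is from the boundary of that normal cone. The boundary of the normal cone of a vertex is cut out by the facets meeting that vertex, whose outer directions involve precisely the edge directions $e \in E$. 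Concretely, for $v \ne \hat u(t)$ a vertex one estimates $\langle -\hat\sigma(t), \hat u(t) - v\rangle$ from below by writing $\hat u(t) - v$ as a nonnegative combination of edge vectors and invoking the one-sided lower bound $|\langle\hat\sigma(t),e\rangle| \ge \mu_0\,\di(t,\Gamma)$ from Assumption (C1) — valid on $[s-\eta_0,s+\eta_0]$ around the nearest zero $s$, which is why the claim is only local and $\e$ must be chosen small enough that $\kappa\e \le \eta_0$. Then
\[
  \langle -\sigma(t), \hat u(t) - v\rangle \;=\; \langle -\hat\sigma(t), \hat u(t) - v\rangle + \langle \hat\sigma(t)-\sigma(t), \hat u(t)-v\rangle \;\ge\; c_1 \mu_0\,\di(t,\Gamma) - \|\sigma-\hat\sigma\|_\infty \,\diam(U),
\]
where $c_1>0$ depends only on the geometry of $U$ (a bound relating edge-combination coefficients to $\hat u(t)-v$). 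Choosing $\kappa := \diam(U)/(c_1\mu_0)$, this is strictly positive whenever $\di(t,\Gamma) > \kappa\|\sigma-\hat\sigma\|_\infty$, i.e.\ $t \in \Sigma(\kappa\|\sigma-\hat\sigma\|_\infty)$; hence $\langle -\sigma(t), v\rangle < \langle -\sigma(t), \hat u(t)\rangle$ strictly for every vertex $v \ne \hat u(t)$. Since a linear functional on a polytope attains its max at a vertex, $\hat u(t)$ is the unique maximizer of $\langle -\sigma(t),\cdot\rangle$, so any $u(t)\in\Argmax$ must equal $\hat u(t)$. Combining over a.e.\ $t$ gives $u(t) = \hat u(t)$ for a.e.\ $t \in \Sigma(\kappa\|\sigma-\hat\sigma\|_\infty)$, as claimed; finally one may enlarge $\kappa$ or shrink $\e$ to absorb the finitely many endpoint/boundary effects and to ensure $\kappa\e\le\eta_0$.

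The main obstacle I anticipate is the geometric lemma bounding $\langle-\hat\sigma(t),\hat u(t)-v\rangle$ from below uniformly in $v\in V$ and $t\notin\Gamma$: one must handle the possibility that the nearest zero $s$ of $\langle\hat\sigma,e\rangle$ differs for different edge directions $e$, and that several edge functionals $\langle\hat\sigma(t),e\rangle$ are simultaneously small near a common zero, so the decomposition of $\hat u(t)-v$ into edge directions must be chosen to pick up at least one edge whose functional is bounded below by $\mu_0\,\di(t,\Gamma)$. This is a finite combinatorial/convex-geometry argument on the fixed polyhedron $U$ (independent of $t$), but writing it carefully — including the constant $c_1$ and verifying the one-sided estimate from (C1) applies at the relevant $s$ — is the delicate part. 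Everything else (the normal-cone reformulation, the perturbation bookkeeping, the a.e.\ conclusion) is routine. Since the paper attributes this to \cite[Lemma 2]{Sozopol}, I would at this point simply cite that reference rather than reproduce the full combinatorial argument.
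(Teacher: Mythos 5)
The paper itself offers no proof of this lemma: it is imported verbatim from \cite[Lemma 2]{Sozopol} (``We recall the following lemma proved in the recent paper\dots''), so there is no in-paper argument to compare yours against. Judged on its own merits, your sketch is correct and follows the standard route for such statements: rewrite $0\in\sigma(t)+N_U(u(t))$ as $u(t)\in\Argmax_{v\in U}\langle-\sigma(t),v\rangle$, note that for $t\notin\Gamma$ the functional $\langle-\hat\sigma(t),\cdot\rangle$ is non-constant along every edge direction and hence has a unique vertex maximizer $\hat u(t)$, and then run a linear-programming stability estimate to show the maximizer is unchanged under the perturbation $\hat\sigma\to\sigma$ as long as $\di(t,\Gamma)>\kappa\|\sigma-\hat\sigma\|_\infty$.

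Two remarks on the points you flag. First, the step you call delicate --- choosing the decomposition of $\hat u(t)-v$ into edge directions so that at least one edge functional is bounded below --- is actually painless: take a monotone (simplex-type) edge path from $v$ to $\hat u(t)$, along which every traversed edge direction $e$ satisfies $\langle-\hat\sigma(t),e\rangle>0$. Since $\Gamma$ is by definition the union over all $e\in E$ of the zero sets of $\langle\hat\sigma,e\rangle$, the nearest zero of any particular $\langle\hat\sigma,e\rangle$ is at distance at least $\di(t,\Gamma)$ from $t$, so Assumption (C1) yields $|\langle\hat\sigma(t),e\rangle|\ge\mu_0\,\di(t,\Gamma)$ for \emph{every} $e\in E$ simultaneously; hence every edge of the path contributes at least $\ell_{\min}\mu_0\,\di(t,\Gamma)$, where $\ell_{\min}$ is the minimal edge length of $U$, and your constant $c_1$ can be taken equal to $\ell_{\min}$. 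No clever selection of edges is needed. Second, the one loose end you should not wave away: (C1) provides the linear lower bound only for $t$ within $\eta_0$ of a zero of $\langle\hat\sigma,e\rangle$; when $t$ is farther than $\eta_0$ from that zero set (or when $\langle\hat\sigma,e\rangle$ vanishes nowhere) you need a separate uniform bound $|\langle\hat\sigma(t),e\rangle|\ge\rho_0>0$, which follows from the finiteness of $\Gamma$ and compactness, and this is also where the condition $\kappa\e\le\eta_0$ (i.e.\ shrinking $\e$) is used. With these two points made precise your argument closes; simply citing \cite[Lemma 2]{Sozopol}, as the paper does, is of course also acceptable.
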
 

With this lemma at hand, we are now ready to establish the following sufficient condition for the fulfillment of Assumption (A3). 

\begin{theo}
Let problem $\mathcal P_{\hat p}(0,x_0)$ be affine and let Assumptions (A1), (A2), and (C1) be fulfilled. Then the following
statements are equivalent:

(i) The map $\Phi_{(\hat p,0,x_0)} : Y \To Z$ is strongly sub-regular at $(\hat y,0)$ in the single metric $d$ in $Y$; 

(ii)  The map $\Phi_{(\hat p,0,x_0)} : Y \To Z$ is strongly sub-regular at $(\hat y,0)$ in the metrics $d$ and $d^*$ in $Y$.
\end{theo}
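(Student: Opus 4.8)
The plan is to prove the two implications separately; the direction $(ii)\Rightarrow(i)$ is the trivial one, so I would dispatch it first, and then spend the real effort on $(i)\Rightarrow(ii)$. For $(ii)\Rightarrow(i)$, recall from Lemma~\ref{L_d<d} (and the remark following it) that $\|y\|\le\gamma\,d^*(y,0)$ for all $y\in Y$, with $\gamma=\max\{1,T+2M\,\mathrm{diam}(U)\}$. Hence if the Ss-R inequality $d^*(y,\hat y)\le\hat\kappa\,d_Z(z,0)$ holds on the balls $\B(\hat y;\hat\al)\cap\Phi_{(\hat p,0,x_0)}^{-1}(\cdot)\cap\B_Z(0;\hat\beta)$, then a fortiori $d(y,\hat y)=\|y-\hat y\|\le\gamma\,d^*(y,\hat y)\le\gamma\hat\kappa\,d_Z(z,0)$ on the same set, which is exactly Ss-R in the single metric $d$ with parameters $(\hat\al,\hat\beta,\gamma\hat\kappa)$. (One should check that the $d$-ball and $d^*$-ball can be matched, but shrinking radii as needed is harmless.)

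\textbf{The substantive direction $(i)\Rightarrow(ii)$.} Assume Ss-R in the single metric $d$: there are $\al_0,\beta_0,\kappa_0>0$ with $\|y-\hat y\|\le\kappa_0\,d_Z(z,0)$ whenever $y\in\B(\hat y;\al_0)$, $z\in\Phi_{(\hat p,0,x_0)}(y)$, $d_Z(z,0)\le\beta_0$. I want to upgrade the left-hand side from $\|y-\hat y\|$ to $d^*(y-\hat y,0)$. Writing $y=(x,\lambda,u)$, $\hat y=(\hat x,\hat\lambda,\hat u)$, note that $d^*(y-\hat y,0)=\|x-\hat x\|_{1,1}+\|\lambda-\hat\lambda\|_{1,1}+d^*(u,\hat u)$, and the first two summands are already controlled by $\|y-\hat y\|$, hence by $\kappa_0\,d_Z(z,0)$. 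So the whole task reduces to estimating $d^*(u,\hat u)$. The key is the structural input from Assumption (C1) via Lemma~\ref{LH3}: if $(x,\lambda,u)$ lies in $\Phi_{(\hat p,0,x_0)}(y)$ with residual $z=(\xi,\nu,\eta,\pi,\rho)$, then the last component of the inclusion reads $\rho(t)\in\nabla_u H(\hat p(t),x(t),\lambda(t),u(t))+N_U(u(t))$, i.e.\ $\sigma(t)+N_U(u(t))\ni 0$ where $\sigma(t):=\nabla_u H(\hat p(t),x(t),\lambda(t),u(t))-\rho(t)$. By the Lipschitz property (A1) and the $W^{1,1}$-control on $x-\hat x$, $\lambda-\hat\lambda$, one gets $\|\sigma-\hat\sigma\|_\infty\le L'(\|x-\hat x\|_{1,\infty}+\|\lambda-\hat\lambda\|_{1,\infty})+\|\rho\|_\infty$ for a suitable constant $L'$ (using that $W^{1,1}$ embeds into $L^\infty$ on a bounded interval, so $\|x-\hat x\|_\infty\le\|x-\hat x\|_{1,1}$, etc.), and both terms on the right are bounded by a constant times $d_Z(z,0)$. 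Shrinking $\beta_0$ so that $\|\sigma-\hat\sigma\|_\infty\le\e$ (the threshold of Lemma~\ref{LH3}), Lemma~\ref{LH3} gives $u(t)=\hat u(t)$ a.e.\ on $\Sigma(\kappa\|\sigma-\hat\sigma\|_\infty)=[0,T]\setminus(\Gamma+[-\kappa\|\sigma-\hat\sigma\|_\infty,\kappa\|\sigma-\hat\sigma\|_\infty])$. By the very definition \eqref{EGam0} of $d^*$ on controls, this means $d^*(u,\hat u)\le\kappa\|\sigma-\hat\sigma\|_\infty\le\kappa C\,d_Z(z,0)$ for a constant $C$. Adding the three estimates yields $d^*(y-\hat y,0)\le\kappa'\,d_Z(z,0)$ on a suitably shrunk pair of balls, which is (ii).

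\textbf{Main obstacle and bookkeeping.} The only genuinely delicate point is making the constants in Lemma~\ref{LH3} interact correctly with the Ss-R radii: Lemma~\ref{LH3} requires $\|\sigma-\hat\sigma\|_\infty$ below a fixed threshold $\e$, while the Ss-R hypothesis controls it only on the ball $\{d_Z(z,0)\le\beta_0\}$, so one must choose the new $\beta$ small enough (and correspondingly shrink $\al$ via $d(y,\hat y)\le\kappa_0 d_Z(z,0)\le\kappa_0\beta$) that the $\|\sigma-\hat\sigma\|_\infty\le\e$ condition is automatically met; this is a routine but order-sensitive argument (choose $\beta$, then $\al$, then verify consistency with $\al_0,\beta_0$). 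A secondary technicality is that the $d$-ball $\B(\hat y;\al)$ is taken in the $W^{1,1}$-metric on $x,\lambda$, whereas the $L^\infty$ bound on $\sigma-\hat\sigma$ wants the $1,\infty$-norm; this is resolved again by the embedding $\|z\|_\infty\le\|z\|_{1,1}$ for absolutely continuous $z$, which is available for free since $z=\dot{(\cdot)}$ appears in the $W^{1,1}$-norm. Everything else is linear bookkeeping with the triangle inequality and the Lipschitz constant $L$ from (A1).
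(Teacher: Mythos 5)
Your proof follows essentially the same route as the paper's: the easy direction via the comparison $d\le\gamma\,d^*$ from Lemma~\ref{L_d<d}, and the substantive direction $(i)\Rightarrow(ii)$ by forming the perturbed switching function $\sigma=\nabla_u H(\hat p,x,\lambda,\cdot)-\rho$, bounding $\|\sigma-\hat\sigma\|_\infty$ by a constant times $d_Z(z,0)$ through the single-metric Ss-R estimate, shrinking $\beta$ to meet the threshold of Lemma~\ref{LH3}, and reading off $d^*(u,\hat u)\le\kappa\|\sigma-\hat\sigma\|_\infty$ from the definition of $d^*$. The one step you leave implicit is why your bound on $\|\sigma-\hat\sigma\|_\infty$ contains no $\|u-\hat u\|_\infty$ term: this holds only because $\nabla_u H$ is independent of $u$ for affine problems (the paper states this explicitly), and it is exactly the point where the affine hypothesis is indispensable.
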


\begin{proof}{}
The implication (ii) $\Mt$ (i) is obvious. Let us prove the converse implication.

Let $\tilde\alpha,\tilde\beta$ and $\tilde\kappa$ be the parameters of Ss-R of $\Phi_{(\hat p,0,x_0)}$ 
(in the single metric $d$ in $Y$). Then 
\begin{align}\label{smsrl1}
	 \| x -\hat x\|_{1,1} + \| \lambda-\hat\lambda \|_{1,1} + \| u-\hat u \|_1\le\tilde\kappa d_Z(z,0)
\end{align}
for all $y=(x,\lambda,u)\in B(\hat y;\tilde\alpha)$ and $z=(\xi, \nu, \eta,  \rho)\in B_Z(0,\tilde\beta)$ satisfying 
$z\in \Phi_{(\hat p,0,x_0)}(y)$. Let us fix arbitrarily such a pair $(y,z)$.
Define $\sigma:[0,T]\to \mathbb R^m$ by $\sigma(t):=\nabla_u H(\hat p,y(t))-\rho$.
Clearly $\sigma(t) + N_U(u(t)) \ni 0$ for a.e. $t \in [0,T]$. Moreover, due to the affine structure of the problem, 
$\nabla_u H(p,y)$ is independent of $u$, thus we can estimate 
\bda
        \|\sigma-\hat\sigma\|_{\infty} &\leq& \|\nabla_u H(\hat p,y) - \nabla_u H(\hat p,\hat y)\|_\infty + \| \rho \|_\infty 
      \leq L_1(  \| x -\hat x\|_\infty + \| \lambda-\hat\lambda \|_\infty ) + \| \rho \|_\infty \\
      &\leq&  L_1 \tilde \kappa d_Z(z,0) + d_Z(z,0) =: \tilde c d_Z(z,0),
\eda
where $L_1$ is the Lipschitz constant of $\nabla_u H(\hat p,\cdot)$.

Define 
\bd
         \hat\alpha=\tilde\alpha, \quad \hat\beta=\min\left\lbrace \tilde\beta, \e /\tilde c\right\rbrace, \quad
           \hat\kappa=\tilde\kappa+\tilde c\kappa,
\ed
where $\e$ and $\kappa$ are the numbers in Lemma \ref{LH3}. For the pair $(y,z)$ we additionally assume that 
$z\in B_Z(0,\hat\beta)$. Then by Lemma \ref{LH3}, 
\bd
	u(t) = \hat u(t) \;\mbox{  for a.e. } \, t \in \Sigma\big(\kappa\tilde cd_Z(z,0) \big),
\ed
which directly implies $d^*(u,\hat u)\le\kappa\tilde cd_Z(z,0)$. Together with (\ref{smsrl1}), we obtain that
\begin{align*}
         d^*(y,\hat y)\le\tilde c\kappa d_Z(z,0) + \tilde \kappa d_Z(z,0) = \hat \kappa  d_Z(z,0),
 \end{align*}
which proves (ii)  with Ss-R constants $\hat \al$, $\hat \beta$, $\hat \kappa$.
\end{proof}

A general sufficient condition for strong sub-regularity of the map  $\Phi_{(\hat p,0,x_0)} : Y \To Z$ 
in the single metric $d$ in $Y$ is given in \cite[Theorem 3.1]{Os+Ve-20}. It involves the following assumption.

\bino
{\bf Assumption (C2).} There is a constant $c_0 > 0$ such for any $v \in \U - \hat u$ the inequality 
\be \label{Ecoerc1}
      \int_0^T\lll \hat H_{u}(t), v(t) \rrr \dd t +  \langle g_{T}''(\hat x(T))x(T),x(T)\rangle +   \int_0^T \left[ \lll \hat H_{xx}(t) x(t), x(t)  \rrr  + 2 \lll \hat H_{ux}(t) x(t), v(t)  \rrr  \right] \dd t  
      \geq  c_0 \| v \|_1^2
\ee
is fulfilled, where $x$ is the (unique) solution of the equation $\dot x(t) = \hat f_x(t) x(t) + \hat f_u(t) v(t)$ with $x(0) = 0$.

\bino
In \cite[Theorem 3.1]{Os+Ve-20} it is proved that
Assumption (C2), together with (A1), (A2), and the affine structure of 
the problem, implies metric sub-regularity of the optimality map in the single metric $d$ in $Y$.
In contrast to the $L^2$ coercivity condition in the previous subsection, Assumption (C2) requires 
``coercivity'' with respect to the $L^1$-norm. 
It is well known that Assumption (B1) does not hold for affine problems, see \cite[Lemma 3]{Don+Kra+Vel-18}. 
Another difference between (B1) and (C2) is that the inequality in (C2) involves not only a quadratic, but also a
linear form of $v$. It is remarkable that alone this linear term can ensure fulfillment of Assumption (C1). 
Indeed, in \cite[Proposition 4.1]{Os+Ve-20} it is proved that Assumption (C1) implies the inequality
\bd
      \int_0^T\lll \hat H_{u}(t), v(t) \rrr \dd t \geq  c_1 \| v \|_1^2
\ed 
for a constant $c_1 > 0$ and all $v \in \U - \hat u$. In particular, if the quadratic form in (\ref{Ecoerc1}) is nonnegative 
for $v \in \U - \hat u$, then (A1), (A2), (C1) imply (C2), hence also Assumption (A3). 


\subsection{A numerical example}
In this section we illustrate the result obtained in Theorem \ref{Tmain} by considering a problem 
of axisymmetric spacecraft spin stabilization from \cite[p. 353]{Wie-98}. 
The transversal angular velocity components $\omega_1$ and $\omega_2$ of the spacecraft satisfy
\begin{equation*}
\begin{aligned}
      &\dot{\omega_1} = \lambda \omega_2 + \frac{M_d}{J_t}, \\
      &\dot{\omega_2} = -\lambda \sin\omega_1 + \frac{M_c}{J_t},
\end{aligned}
\end{equation*}
where $\lambda = \frac{J_t - J_3}{J_t}n$, $J_t$ is the spacecraft transversal moment of inertia, 
$J_3$ is the spacecraft moment of inertia about the spin axis, $n$ is the spin rate, 
$M_d$ is the disturbance torque, which can be caused by thruster misalignment, 
and $M_c$ is the control moment. Rescaling the time $(t \rightarrow \lambda t)$, denoting 
$x_1 = \omega_1$, $x_2 = \omega_2$, $p =\frac{M_d}{J_t}$, 
adding initial conditions, and considering $u := \frac{M_{c}}{J_t}$ as a control variable, 
we reformulate the model as
\begin{align}\label{Ex12}
 \left\{ \begin{array}{ll}
     \dot{x}_1 = x_2 + p, \quad & x_1(0) = 1,\\
     \dot{x}_2 = -\sin x_1 + u,  \quad  & x_2(0) = 1,\\
       -a \leq u \leq a,
\end{array}
\right.
\end{align}
where $p(\cdot)$ is a time-dependent parameter and $a$ is a positive constant.
The MPC algorithm is applied in \cite{AD+IK+MK+VV+PV-20} to the following optimal control problem 
with the dynamic (\ref{Ex12}):
\bd
              \text{min} \Big\{ |x(T)|^2 + \alpha \int_0^T (u(t))^2 \dd t \Big\},  
\ed
where $\al$ is a positive weighting parameter. 
This problem is coercive in the sense of Assumption (B1), which makes the analysis in  
\cite{AD+IK+MK+VV+PV-20} possible. Here we consider the alternative objective functional 
\be\label{EOF1}
       \text{min} \Big\{ |x(T)|^2 + \alpha \int_0^T |u(t)| \dd t \Big\},
\ee
which may be more realistic in case of direct transformation of fuel into force, as in jet engines. 
The optimal control problem (\ref{Ex12})-(\ref{EOF1}) is not coercive, nor does it fit in
the framework of affine problems. However, following \cite[Remark 3.3]{Sethi}, we transform 
it to an affine problem by substituting 
\begin{align*}
	u = u_1 - u_2, \quad  |u|=u_1+ u_2, \quad \text{where} \quad u_1, u_2 \in [0,1]. 
\end{align*} 
Thus, the affine optimal control problem we will consider is
\bd
\begin{aligned}
       \text{min} \Big\{ |x(T)|^2 + \alpha \int_0^T [u_1(t)+u_2(t)]\, \dd t \Big\},
\end{aligned}
\ed 
subject to
\bda
      \left\{ \begin{array}{llc}
        \dot{x}_1 = x_2 + p, \quad  &x_1(0) = 1,\\
        \dot{x}_2 = -\sin x_1 + u_1-u_2, \quad  &x_2(0) = 1,\\
              0 \leq u_1,u_2 \leq a.
\end{array}
\right.
\eda
We consider the last problem with the specifications $T=4\pi$, $\alpha = 0.25$, $a = 0.2$, 
and reference parameter $\hat{p}(t) \equiv 0$. In the MPC simulation,  
the measurement error $e_k$ is sampled randomly from a uniform distribution, with $|e_k|\le 0.1$.
The parameter $p(\cdot)$ is piecewise constant on the uniform mesh of $3200$ points in $[0,T]$; 
its values in every subinterval are chosen randomly in the interval $[-0.05, 0.05]$ with uniform distribution.  
For solving the auxiliary problems $\p_{p}(t_k, x_k^0)$ we use the Euler discretization scheme, 
which provides an error $e_k^{u}$ of order $O(h)$ see, e.g., \cite{WAlt+UF+MS-18, Os+Ve-20}); 
recall that $e_k^{u}:=\|\tilde z_k\|$ and that $\tilde z_k$ is the residual in (\ref{LRrez}).

We run the MPC algorithm with different mesh sizes $N$.
Using the notations in Theorem \ref{Tmain}, we consider the quantity
\begin{equation}
	\begin{aligned}
	RE = \frac{||u^N - \hat{u}||_1 + ||x^N - \hat{x}||_{1,1}}{\sqrt{\frac{1}{N}
             \sum_{k=0}^{N-1}(|e_k| + e_k^p + h)} + h},
	\end{aligned}
\end{equation}
which represents the relative error in the MPC-generated solution $(x^N, u^N)$. 
According to the error estimate in Theorem \ref{Tmain}, the quantity $RE$ should be bounded.	
The numerical experiment confirms this, as can be seen in Table \ref{table:1}.
Moreover, the result suggest that the value $RE$ stays away from zero when $N$ increases, 
which indicates that the estimate in Theorem \ref{Tmain} is sharp for this example.  

We also observe in Table \ref{table:1} that the objective values for the MPC-generated solutions
decrease when $N$ increases, which is to be expected because of the more frequent measurements.

In Figure \ref{fig:exp}, we compare the obtained MPC-generated controls and the corresponding trajectories 
with the open-loop solution $\hat u = \hat u_1 - \hat u_2$. The auxiliary controls $\hat u_1$ and $\hat u_2$ are of bang-bang
type, while the resulting optimal control $u$ in problem (\ref{Ex12})-(\ref{EOF1}) also takes value zero. 
The MPC-generated control $u^N$ differs from the optimal open-loop one in small intervals around the switching points 
of the latter, which is consistent with the choice of the metric in $d^*$ in case of affine problems.

\begin{table}[h!]
	\centering
	\begin{tabular}{||c  ||c c c c c c||} 
		\hline
		N & 160 & 320 & 480 & 640 & 800 & 960 \\ [0.5ex]
		\hline\hline
		Obj. val. & 1.3221 & 0.7907 & 0.7204 & 0.6490 & 0.6274 & 0.6183 \\
    $RE$      & 0.6039 & 0.3205 & 0.2177 & 0.1245 & 0.1445 & 0.1346 \\  [1ex] 
		\hline
	\end{tabular}
	\caption{Objective values and relative errors $RE$ of the MPC-generated solutions with different mesh sizes.}
	\label{table:1}
\end{table}

\begin{figure}[h!]
		\centering
		\begin{subfigure}[b]{0.48\textwidth}
			\centering
			\includegraphics[width=\textwidth]{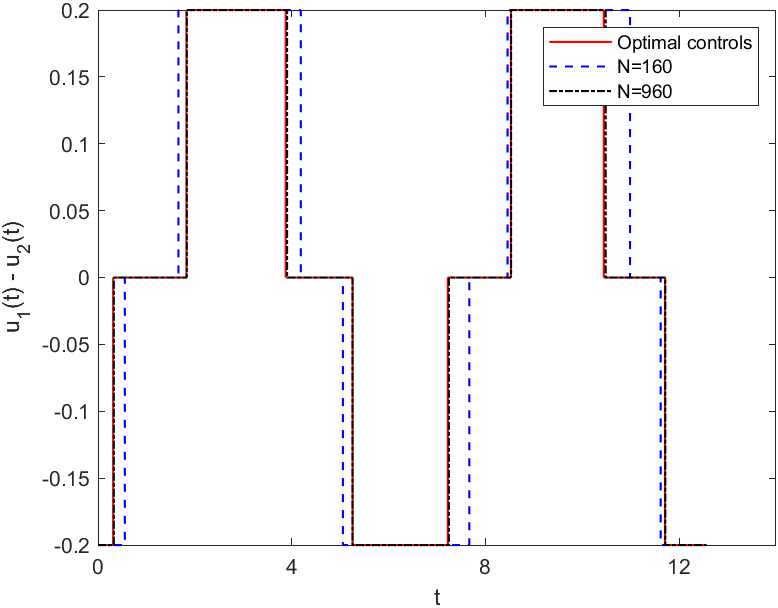}
			\caption{Control solutions}
		\end{subfigure}
		\hfill
		\begin{subfigure}[b]{0.48\textwidth}
			\centering
			\includegraphics[width=\textwidth]{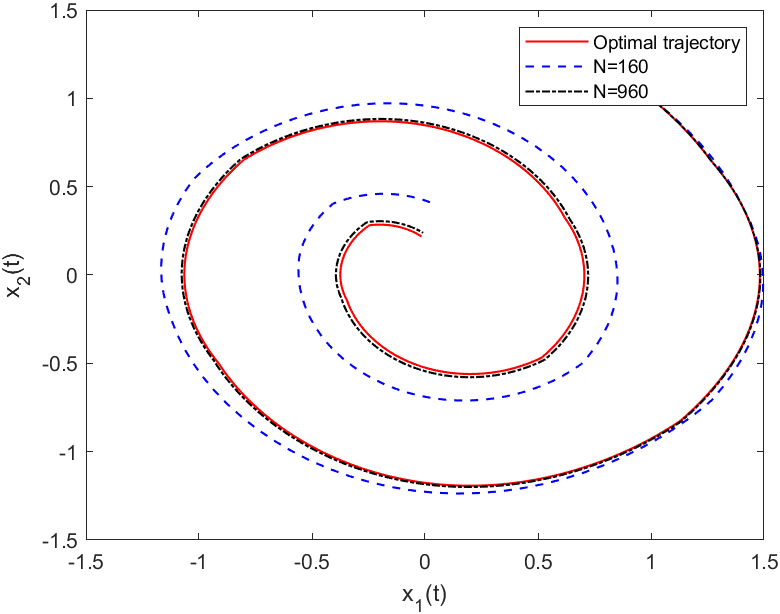}
			\caption{Trajectories}
		\end{subfigure}
	\caption{Red: optimal open-loop control and trajectory; Blue and Black: MPC generated solutions 
            with N=160 and N=960, correspondingly.}
		\label{fig:exp}
\end{figure}

\section{Proof of the main theorem} \label{Sproof}

In this section we use the notations introduced in Sections \ref{SPrelim} and \ref{S_MPC}.
In addition, $y \in Y_\tau$ we define
\bd
          d_\tau^*(y, \hat y) := d^*(\tilde y, \hat y), \quad \mbox{ where } 
               \tilde y(t) = \left\{ \begin{array}{cl}
                            \hat y(t) & \mbox{ for } t \in [0,\tau), \\ 
                             y(t)   & \mbox{ for } t \in [\tau, T].
                                \end{array} \right. 
\ed

\begin{prop} \label{Pmain_error}
Let assumptions (A1)--(A3) be fulfilled. Then there exist numbers $\delta_0 > 0$, $\al_0 > 0$, $\beta_0 >0$,  
and $c_0$ such that for every $\tau \in [0,T)$, $x_\tau \in \R^n$ and $p \in \varPi$ satisfying
\be \label{Edelta0}
            | x_\tau - \hat x(\tau)| \leq \delta_0, \qquad \|p - \hat p \|_\infty \leq \delta_0,
\ee
and for every $y = (x, \lambda, u) \in Y_\tau$ with $\| u - \hat u \|_1 \leq \al_0$ and 
$z_\tau \in \Phi_{(p,\tau,x_\tau)}(y) \cap \B_{Z_\tau}(0;\beta_0)$ it holds that 
\bd
                  d_\tau^*(y,\hat y) \leq c_0 \left( \| z_\tau \| + \| p - \hat p \|_\infty + | x_\tau - \hat x(\tau) | \right).
\ed
\end{prop}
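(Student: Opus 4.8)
The plan is to derive the estimate from Assumption~(A3) by reducing the shortened‑horizon perturbed optimality map $\Phi_{(p,\tau,x_\tau)}$ on $Y_\tau$ to the reference map $\Phi_{(\hat p,0,x_0)}$ on the full horizon $Y$. Two moves are needed: an \emph{extension to $[0,T]$} of the candidate $y\in Y_\tau$, and an application of Proposition~\ref{P_LG} to absorb the data perturbations $p\neq\hat p$ and $x_\tau\neq\hat x(\tau)$ as functional perturbations with small Lipschitz constant.

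Fix $(\tau,x_\tau,p)$ and $(y,z_\tau)$ as in the statement, write $y=(x,\lambda,u)$, $z_\tau=(\xi,\nu,\eta,\pi,\rho)$, and note $|x(\tau)-\hat x(\tau)|\le|\nu|+|x_\tau-\hat x(\tau)|\le\|z_\tau\|+|x_\tau-\hat x(\tau)|$. Build $\bar y=(\bar x,\bar\lambda,\bar u)\in Y$ coinciding with $y$ on $[\tau,T]$ and, on $[0,\tau)$, given by $\bar u=\hat u$, by $\bar x$ the solution of $\dot{\bar x}=f(\hat p,\bar x,\hat u)$ determined by the endpoint value $\bar x(\tau)=x(\tau)$, and by $\bar\lambda$ the solution of $\dot{\bar\lambda}=-\nabla_x H(\hat p,\bar x,\bar\lambda,\hat u)$ with $\bar\lambda(\tau)=\lambda(\tau)$; thus $\bar x,\bar\lambda$ are absolutely continuous at $\tau$. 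The residual $\bar z\in Z$ of $\bar y$ for $\Phi_{(\hat p,0,x_0)}$ then has zero state and adjoint components on $[0,\tau)$; its restriction to $[\tau,T]$ equals $z_\tau$ up to terms of size $O(\|p-\hat p\|_\infty)$ from replacing $p$ by $\hat p$; its new initial‑condition component $\bar x(0)-x_0$ is $O(|x(\tau)-\hat x(\tau)|)$ by Gronwall on $[0,\tau]$; and its stationarity component on $[0,\tau)$ may be taken to be $\nabla_u H(\hat p,\bar x,\bar\lambda,\hat u)-\nabla_u H(\hat p,\hat x,\hat\lambda,\hat u)$, which is admissible because the reference optimality condition gives $-\nabla_u H(\hat p,\hat x,\hat\lambda,\hat u)(t)\in N_U(\hat u(t))$ a.e., and is of size $O(\|\bar x-\hat x\|_{C[0,\tau]}+\|\bar\lambda-\hat\lambda\|_{C[0,\tau]})$; both sup‑norms are bounded by Gronwall through $|x(\tau)-\hat x(\tau)|$ and $A:=|\lambda(\tau)-\hat\lambda(\tau)|$.

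Now choose $\delta_0,\alpha_0,\beta_0$ small enough that $\bar y\in\B(\hat y;\hat\alpha)$ and $\bar z\in\B_{Z}(0;\hat\beta)$ — a preliminary Gronwall estimate on $[\tau,T]$ already makes $x-\hat x$ and $\lambda-\hat\lambda$ small in $W^{1,1}(\tau,T)$ under the hypotheses, so this is legitimate. Assumption~(A3) then yields $d^*(\bar y,\hat y)\le\hat\kappa\|\bar z\|$, and by construction $d_\tau^*(y,\hat y)\le d^*(\bar y,\hat y)$: $\bar y$ and the triple glued with $\hat y$ that defines $d_\tau^*$ agree on $[\tau,T]$ and carry the same control on all of $[0,T]$, while the extra state/adjoint pieces of $\bar y$ on $[0,\tau)$ only add to the distance. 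The dependence on $x_\tau$ and $p$ is then removed by Proposition~\ref{P_LG} applied to $\Phi_{(\hat p,\tau,\hat x(\tau))}$: $\hat x(\tau)\rightsquigarrow x_\tau$ is a constant shift of $Z_\tau$ of norm $|x_\tau-\hat x(\tau)|$, and $\hat p\rightsquigarrow p$ is a Nemytskii perturbation whose Lipschitz constant in $y$ is $O(\|p-\hat p\|_\infty)\le O(\delta_0)$ by Assumption~(A1), hence satisfies the smallness requirement $\mu\gamma\kappa'<1$ for $\delta_0$ small. Combining gives $d_\tau^*(y,\hat y)\le c_0(\|z_\tau\|+\|p-\hat p\|_\infty+|x_\tau-\hat x(\tau)|)$.

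The step I expect to be the real obstacle is controlling $A=|\lambda(\tau)-\hat\lambda(\tau)|$, which appears in the stationarity residual of $\bar y$ on $[0,\tau)$. Propagating the adjoint and state equations from $T$ back to $\tau$ in the naive way produces a term $C\,\|u-\hat u\|_{L^1(\tau,T)}$ with a Gronwall–Lipschitz constant $C$ that is not small; since Lemma~\ref{L_d<d} gives $\|u-\hat u\|_{L^1(\tau,T)}\le\gamma\,d_\tau^*(y,\hat y)$, one is left with a self‑referential inequality $d_\tau^*(y,\hat y)\le(\text{data})+\hat\kappa C\gamma\,d_\tau^*(y,\hat y)$ that closes only if $\hat\kappa C\gamma<1$. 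Getting around this — i.e.\ a restriction of strong sub‑regularity uniform in $\tau\in[0,T)$ — is, I expect, the technical core: one would first prove the estimate for $\tau$ close to $T$, where the constants over the short remaining interval are close to $1$ and the self‑reference is harmless, and then propagate it backward by induction on the horizon, invoking at each stage the estimate already established on a later subinterval together with the special structure of the metric $d^*$. Everything else reduces to routine Gronwall estimates and Proposition~\ref{P_LG}.
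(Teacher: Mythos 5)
Your construction is essentially the paper's own: extend $y$ to the whole horizon by taking $\hat u$ on $[0,\tau)$ and solving the state and adjoint equations backward from the data at $\tau$ (the paper's $\tilde x,\tilde\lambda$ with $\tilde x(\tau)=x_\tau+\nu$ and the terminal condition at $T$), then apply Assumption~(A3) to the extended triple and use $d_\tau^*(y,\hat y)\le d^*(\tilde y,\hat y)$. The one structural difference is that the paper does not invoke Proposition~\ref{P_LG} and performs no induction on $\tau$: the perturbations $p\neq\hat p$ and $x_\tau\neq\hat x(\tau)$ are charged directly to the residual $r$ of the extended triple in $\Phi_{(\hat p,0,x_0)}$, and (A3) is applied exactly once. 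Your two-stage variant (extension to get sub-regularity of $\Phi_{(\hat p,\tau,\hat x(\tau))}$, then Proposition~\ref{P_LG} to absorb $p$ and $x_\tau$) is precisely the ``alternative way'' the paper mentions in the remark following its proof, so that part is a legitimate reorganization rather than a different idea.

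The real issue is your last paragraph. The paper never meets the self-referential inequality you describe, because it takes the stationarity residual to be $r_\rho=\nabla_u H(\hat p,\tilde y)-\nabla_u H(\tilde p,\tilde y)+\tilde\rho$, which vanishes identically on $[0,\tau)$; this implicitly asserts that $-\nabla_u H(\hat p,\tilde x,\tilde\lambda,\hat u)(t)\in N_U(\hat u(t))$ there, i.e.\ that the reference stationarity inclusion survives replacing $(\hat x,\hat\lambda)$ by the backward extensions $(\tilde x,\tilde\lambda)$. You are right that an honest bound of this component is $L(\|\tilde x-\hat x\|_{C[0,\tau]}+\|\tilde\lambda-\hat\lambda\|_{C[0,\tau]})$, that the adjoint term brings in $|\lambda(\tau)-\hat\lambda(\tau)|$ and hence a multiple of $\|u-\hat u\|_{L^1(\tau,T)}$ with a non-small coefficient, and that closing the resulting inequality via Lemma~\ref{L_d<d} would require a smallness condition of the form $\hat\kappa C\gamma<1$ that is not guaranteed. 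So you have put your finger on the single delicate step of the argument — one the paper passes over. However, your proposed repair is not a proof: the backward induction on the horizon is only announced, and you give no mechanism by which the coefficient of $\|u-\hat u\|_{L^1(\tau,T)}$ improves as the induction proceeds, so the same non-contractive inequality would reappear at every stage. As written, your attempt establishes exactly what the paper's computation establishes and then stops at the acknowledged obstacle; to complete it you must either justify that the stationarity residual on $[0,\tau)$ is of order $\|z_\tau\|+\|p-\hat p\|_\infty+|x_\tau-\hat x(\tau)|$ only (the paper's implicit claim), or actually carry out the induction you sketch.
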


\begin{proof}{}
We shall fix the numbers $\delta_0 > 0$, $\al_0 > 0$, $\beta_0 > 0$ and $c_0$ below, in a way that they 
depend only on $\hat \al$, $\hat\beta$, $\hat \kappa$, $L$ and $T$ and may be viewed as constants.
The numbers $c_1$, $c_2$, ..., that will appear later will also be appropriate constants in the  same sense.

Let us fix arbitrarily $\tau \in [0,T)$, $x_\tau \in \R^n$, and $p \in \varPi$ satisfying (\ref{Edelta0}), along with 
$y_\tau = (x(\cdot), \lambda(\cdot), u(\cdot)) \in \B(\hat y; \al_0)$ and 
$z_\tau = (\xi,\nu, \eta,\rho) \in \B_{Z_\tau}(0;\beta_0)$ 
such that $z_\tau \in \Phi_{(p,\tau,x_\tau)}(y_\tau)$. We define
\bd
               \tilde p(t) = \left\{ \begin{array}{cl}
                            \hat p(t) & \mbox{ for } t \in [0,\tau), \\ 
                             p(t)   & \mbox{ for } t \in [\tau, T],
                                \end{array} \right. \qquad
               \tilde u(t) = \left\{ \begin{array}{cl}
                            \hat u(t) & \mbox{ for } t \in [0,\tau), \\ 
                             u(t)   & \mbox{ for } t \in [\tau, T].
                                \end{array} \right.
\ed
Obviously $\| \tilde p - \hat p \|_\infty = \| p - \hat p \|_\infty$,  $\| \tilde u - \hat u \|_1 = \| u - \hat  u \|_1$, and 
$d_0^*(\tilde u, \hat u) =d_\tau^*(u,\hat u)$.  Similarly, we extend $\xi,\eta$ and $\rho$ as zero on $[0,\tau)$, denoting the 
resulting elements of $Z_0$ by $\tilde \xi,\tilde\eta$ and $\tilde \rho$.
Moreover, we define $\tilde x$ as the (unique) solution on $[0, T]$ of the equations
\bd
             \dot {\tilde x} = f(\tilde p, \tilde x, \tilde u) -\xi, \quad \tilde x(\tau) = x_ \tau + \nu,
\ed
and the function $\tilde \lambda$ as the  solution of 
\bd
            -\dot {\tilde \lambda} = \nabla_{\!\! x} H(\tilde p, \tilde x, \tilde \lambda,  \tilde u) -\eta, 
               \quad \tilde \lambda(T) = \nabla g_T(\tilde x(T)) + \pi.
\ed
Notice that $\tilde x(t) = x(t)$ and $\tilde \lambda(t) = \lambda(t)$ for $t \in [\tau,T]$.

Let us estimate $\| \tilde y - \hat y \|$. Using Assumption (A1) and the Gr\"{o}nwall inequality we obtain that 
\bd
         \| \tilde x - \hat x \|_{\infty} \leq c_1 \left(\| \tilde p - \hat p \|_\infty + \| \tilde \xi \|_1  + 
         | x_\tau - \hat x(\tau) | + |\nu | \right) \leq
                 c_2 \left( \delta_0 + \| \xi \|_1 + \delta_0 + | \nu |\right) \leq c_3 (\delta_0 + \|  z_\tau \|).
\ed
From here one can also estimate $\| \dot{\tilde x} - \dot{\hat x} \|_1$, which gives
\bd
          \| \tilde x - \hat x \|_{1,1} \leq c_4 (\delta_0 + \|  z_\tau \|).
\ed
Similarly we estimate
\bd
              \| \tilde \lambda - \hat \lambda \|_{1,1} \leq c_5 (\delta_0 + \|  z_\tau \|).
\ed
Moreover, 
\bd
             \| \tilde u - \hat u\|_1 =  \| u - \hat u\|_1 \leq \al_0.
\ed
The last three estimates imply that 
\bd
               \| \tilde y - \hat y \| \leq c_6 (\delta_0 + \alpha_0 + \beta_0) \leq \hat \al,
\ed
provided that the positive numbers $\delta_0$, $\al_0$ and $\beta_0$ are chosen sufficiently small.

Now we estimate the residual $r := (r_\xi, r_\nu, r_\eta, r_\rho) \in Z_0$ 
which $\tilde y := (\tilde x, \tilde \lambda, \tilde u)$ gives in $\Phi_{(\hat p,0,x_0)}$. We have
\bda
              \| r_\xi \|_1 &=& \big\| f(\hat p, \tilde x, \tilde u) - f(\tilde p, \tilde x, \tilde u) + \tilde \xi \big\|_1
                  \leq  T L \| p - \hat p \|_\infty  + \| \xi \|_1, \\
              | r_\nu | &= & | \tilde x(0) - x_0 | \leq c_1( \| p - \hat p \|_\infty +  |x_\tau - \hat x(\tau)| + | \nu | ), \\
              \| r_\eta \|_1 &=&  \big\| \nabla_{\!\! x} H (\hat p, \tilde y) -  
                  \nabla_{\!\! x} H(\tilde p, \tilde y) + \tilde \eta \big\|_1 \leq  T L \| p - \hat p \|_\infty  + \| \eta \|_1, \\
                  |r_\pi| = |\pi|,\\ 
                \| r_\rho \|_\infty &=&  \| \nabla_{\!\! u} H (\hat p, \tilde y) -  
                  \nabla_{\!\! u} H(\tilde p, \tilde y) + \tilde \rho \|_\infty \leq  L \| p - \hat p \|_\infty  + \| \rho \|_\infty.
\eda
Summarizing, we obtain that 
\bd
               \| r \| \leq c_7 \left( \| z_\tau \| + \| p - \hat p \|_\infty + | x_\tau - \hat x(\tau) | \right).
\ed     
We can choose $\delta_0$ and $\beta_0$ smaller if needed so that $\|r\|\le\hat\beta$. Due to Assumption (A3) we have that
\bd
             d^*_0(\tilde y, \hat y) \leq \hat \kappa  c_7 \left( \| z_\tau \| + \| p - \hat p \|_\infty + | x_\tau - \hat x(\tau) | \right).
\ed
Since $d^*_\tau(\tilde y, \hat y) \leq d^*_0(\tilde y, \hat y)$ and $\tilde y = y$ on $[\tau,T]$, 
we obtain the desired result with $c_0 = \hat \kappa c_7$.
\end{proof}

An alternative way to prove the last proposition is first to show that Assumption (A3) holds for all maps 
$\Phi_{(\hat p,\tau, \hat x(\tau))}$ with $\tau \in [0,T)$, and then to apply Proposition \ref{P_LG}.

\bino
Now we continue with the proof of Theorem \ref{Tmain}.

Let $\delta_0$, $\al_0$, $\beta_0$ and $c_0$ be the constants from Proposition \ref{Pmain_error}.
Define the following constants: $M$ is number of elements of $\Gamma$ (equals zero if $\Gamma = \emptyset$) and
\be \label{Econst}
    D := \mbox{ diam}(U), \qquad 
           \bar C_1 := c_0 L T e^{LT}, \qquad \bar C_2 := 2 D L e^{TL} \sqrt{c_0TM}, 
             \qquad   \bar C_3 := 6 M D L e^{LT}.
\ee
Let the numbers $N_0$ and $\delta > 0$ be defined in such a way that
\bd
            \hat \delta + \delta \leq \delta_0, \qquad  \delta \leq \al_0, \qquad \delta \leq \beta_0,
\ed
where $\hat \delta :=\bar C_1 \delta + \bar C_2 \sqrt{\delta} + \bar C_3 h$,
which is obviously possible.
Moreover, denote 
\bd
                {\cal E}_i := |e_i| + e^p_i + e^{u}_i,   \;\; i = 0, \ldots, N-1.
\ed
Since for any $i \in \{ 0, \ldots, N-1 \}$ the triplet $y = \tilde y_i$ satisfies (\ref{LRrez}), we shall apply 
Proposition~\ref{Pmain_error} 
with $y = \tilde y_i$, $\tau = t_i$ $x_\tau = x_i^0 = x^N(t_i) + e_i$, $p = p_i$, $z_\tau = \tilde z_i$. We have
\bd
                  \| p_i - \hat p \|_\infty \leq e^p_i \leq \delta \leq \delta_0,
\ed
\bd
            \| \tilde u_i - \hat u \|_1  \leq \delta \leq \alpha_0,
\ed
\bd
             \| \tilde z_i \| = e_i^{u} \leq \delta \leq \beta_0.  
\ed
Proposition \ref{Pmain_error} gives 
\be \label{EHu}
            d^*_{t_i}(\tilde u_i, \hat u)  \leq c_0 (|e_i| + e^p_i + e^{u}_i) =  c_0 \E_i,
\ee
provided that 
\be \label{EH61}
               | x^N(t_i) + e_i - \hat x(t_i) | \leq \delta_0. 
\ee

Let us fix an arbitrary $k \in \{1, \ldots, N-1 \}$ and denote
\bd
     d_i := \| \tu_i - \hu \|_{L^\infty(t_i,t_{i+1})}, \qquad 
        \Delta(t) := |x^N(t) - \hat x(t)|, \qquad \Delta_i = \Delta(t_i), \;\; i = 0, \ldots, k.
\ed 
Assume inductively that 
\be \label{EDxu}
    \Delta_k \leq \hat \delta \quad \mbox{ and } \;  d^*_{t_i}(\tilde u_i, \hat u) \leq c_0 \E_i, \;\; i=0, \ldots, k.
\ee
For $k=0$ we have $\Delta_0 = |x^N(0) - \hat x(0)|  = 0$. Thus (\ref{EH61}) is fulfilled because 
$|e_0| \leq \delta \leq \delta_0$. 
The second inequality in (\ref{EDxu}) is fulfilled due to (\ref{EHu}), thus
the inductive assumption is fulfilled for $k = 0$.

Due to Assumption (A1) and the construction of
$u^N$ in the MPC method, we have for $t \in [t_k,t_{k+1}]$
\bd
      \Delta(t)  \leq \Delta_k + \int_{t_k}^t | f(\hat p(s), x^N(s), \tilde u_k(s)) -   f(\hat p(s), \hat x(s), \hu(s)) | \dd s
      \leq \Delta_k + \int_{t_k}^t (L \Delta(s) + L | \tu_k(s) - \hu(s)|) \dd s.
\ed
Using the Gr\"{o}nwall inequality we obtain that 
\bd
            \Delta(t) \leq e^{Lh} (\Delta_k + hLd_k).
\ed
Applied to $\Delta_{k+1} = \Delta(t_{k+1})$, this recursive inequality implies in a standard way that 
\be \label{EDel_k}
             \Delta_{k+1} \leq hL\Big(e^{(k+1)hL} d_0 + e^{khL} d_1 + \ldots + e^{hL} d_{k} \Big) 
                    \,\leq \, e^{TL} L h \sum_{i=0}^{k} d_i.
\ee
 
The key part of the proof is to estimate $\sum_{i=0}^{k} d_i$. Let us denote
\bda
       \bar K &:=& \{i \in \{0, \ldots, k \} \sth | \tu_i (t) - \hu(t) | \leq d^*_{t_i}(\tu_i, \hu) 
           \mbox{ for a.e. } t \in [t_i, t_{i+1}] \}, \\
          K & := & \{ 0, \ldots, k\} \setminus \bar K.
\eda
Then
\be \label{EH12}
          d_i \leq \left\{ \begin{array}{cl}
                               D & \mbox{ for } i \in K, \\
                               d_{t_i}^*(\tilde u_i,  \hat u) & \mbox{ for } i \in \bar K.
                 \end{array} \right.
\ee
Denoting by $s$ the number of elements of $K$, we have due to the inductive assumption (\ref{EDxu}), that
\be \label{EH75}
    \sum_{i=0}^{k} d_i =  \sum_{i \in K} d_i + \sum_{i \in \bar K} d_i \leq 
                   s \, D + \sum_{i \in \bar K} d^*_{t_i}(\tu_i, \hu) \leq s \, D + \sum_{i \in \bar K} c_0 \E_k
             \leq s \, D +  \frac{c_0 T}{h} \E .
\ee
Let us assume that $\Gamma \not= \emptyset$, that is, $M > 0$.
The definition of the metric $d^*$ in (\ref{EGam0}) implies that for each $i \in K$ there exists 
$t \in (t_i, t_{i+1})$ such that 
\be \label{EH787} 
         \di(t, \Gamma) \leq d^*_{t_i}(\tu_i, \hu).
\ee
Let $m(i)$ be the minimal natural number (also including $0$) such that 
\bd
         \left( (t_i, t_{i+1})  + h [ -m(i), m(i) ] \right) \cap \Gamma \not= \emptyset.
\ed
Then in the case $m(i) > 0$ we have that 
\bd
         \left( (t_i, t_{i+1})  + h [ -m(i)+1, m(i) - 1 ] \right) \cap \Gamma = \emptyset,
\ed
hence
\bd
         \left( t + h [ -m(i)+1,m(i) - 1 ] \right) \cap \Gamma = \emptyset.
\ed
Due to (\ref{EH787}) we obtain that 
\be \label{Ed_m}
           d^*_{t_i}(\tu_i, \hu) \geq  h (m(i) - 1 ), \quad i \in K.
\ee
Denote by $l_j$, $j = 0, 1, \ldots, N$ the number of those indexes $i \in K$ for which $m(i) = j$. The following relations 
are apparently satisfied:
\bea \nonumber
              && 0 \leq l_0 \leq M, \\
              && 0 \leq l_j \leq 2M, \quad j = 1, \ldots, N,  \label{Elj} \\
               && \sum_{j=0}^N l_j = s.  \nonumber
\eea 

Then, having in mind (\ref{Ed_m}),
\be \label{EH44}
        \sum_{i \in K} d^*_{t_i}(\tilde u_i, \hat u) \,\geq\,  h \sum_{i \in K} \max\{ 0, m(i) - 1 ) \}  \geq 
                h\Big( 0.l_0 + 0. l_1 + \sum_{j=2}^N (j-1) l_j \Big).
\ee

The minimum of the sum in the right-hand side with respect to $\{l_j\}$ subject to the relations around (\ref{Elj})
is attained at
\bd
           l_0 = M, \; l_j = 2M, \; j = 1, \ldots, r, \;\;\; l_{r+1} = s - (M + 2Mr),   
\ed
where $r = \left[ \frac{s-M}{2M}\right]$ and $[a]$ means the integer part of $a$. Substituting $l_j$ in (\ref{EH44})
we obtain that
\bd
     \sum_{i \in K} d^*_{t_i}(\tilde u_i, \hat u) \,\geq\,  2 h M \sum_{j=2}^{r} (j-1) = hM r (r-1) \geq 
                hM \left( \left[ \frac{s}{2M}\right] -2\right)^2.
\ed
From here and the second inequality in (\ref{EDxu}) we obtain that
\bd
     \Big([\frac{s}{2M}] - 2\Big)^2 \leq \frac{c_0}{Mh} \sum_{i=0}^k \E_{i} = \frac{c_0 T}{Mh^2} \E,  
\ed
hence
\bd
       s \,\leq\, 6M +  \frac{2}{h} \sqrt{c_0TM \E}. 
\ed
From (\ref{EH75})  we obtain that 
\be \label{Esdi}
      h \sum_{i=0}^k d_i \,\leq\,  D \Big( 6Mh  + 2 \sqrt{c_0 TM} \sqrt{\E}\Big) +  c_0 T \E,
\ee
which combined with (\ref{EDel_k}) and (\ref{Econst}) gives
\be \label{EH55}
       \Delta_{k+1} \,\leq\, \bar C_1 \E + \bar C_2 \sqrt{\E} + \bar C_3 h.
\ee
This inequality was obtained in the case $M > 0$. However, in the case $M = 0$ the first term in the final inequality  
in (\ref{EH75}) is missing and the analysis simplifies, resulting in the same estimation for $\Delta_{k+1}$
but with $M = 0$, which implies $\bar C_2 = \bar C_3 = 0$.

Now, in order to verify  the inductive assumption (\ref{EDxu}) we observe that $\E \leq \delta$, hence from (\ref{EH55})
\bd
             \Delta_{k+1} \leq   \bar C_1 \delta + \bar C_2 \sqrt{\delta} + \bar C_3 h = \hat \delta,
\ed
thus the first inequality in  (\ref{EDxu}) is satisfied for $k+1$. 
The second inequality in (\ref{EDxu}) for $k+1$ follows from (\ref{EHu}), which is fulfilled for $i=k+1$
because (\ref{EH61}) holds: 
\bd
     |x^N(t_{k+1}) + e_{k+1} - \hat x(t_{k+1}) | \leq \Delta_{k+1} + |e_{k+1}| \leq \hat \delta + \delta \leq \delta_0.
\ed

This completes the induction step. As a result we have obtained that for any $t \in [0,T]$ if $t \in [t_k,t_{k+1}]$
then
\be \label{EDel_t}
              \Delta(t)  \leq \Delta_{k+1} \leq  \bar C_1 \E + \bar C_2 \sqrt{\E} + \bar C_3 h. 
\ee

Now we estimate
\bea
        \nonumber   \| u^N - \hat u\|_1 &=& \sum_{i=0}^{N-1} \int_{t_i}^{t_{i+1}}|u^N(t) - \hat u(t)| \dd t
     \,\leq \, \sum_{i=0}^{N-1} h \|u^N - \hat u\|_{L^\infty(t_i,t_{i+1})} 
            = \sum_{i=0}^{N-1} h \|\tilde u_i - \hat u\|_{L^\infty(t_i,t_{i+1})} 
    =  h \sum_{i=0}^{N-1} d_i \\ &\leq& D \Big( 6Mh  + 2 \sqrt{c_0 TM} \sqrt{\E}\Big) +  c_0 T \E, \label{EH59}
\eea
where in the last inequality we use (\ref{Esdi}) for $k = N-1$. 
Finally, we have
\bd
      \| \dot x^N - \dot{\hat x}\|_1 \leq \int_0^T | f(\hat p(t), x^N(t), u^N(t)) -  f(\hat p(t), \hat x(t), \hat u(t))| \dd t
       \leq LT \| \Delta \|_C + L \| u^N - \hat u \|_1.
\ed
Combining this inequality with (\ref{EDel_t}) and (\ref{EH59}), and considering separately the case $M = 0$,
we obtain existence of constants $C_1$, $C_2$ and $C_3$ for which the claim of the theorem holds. 

\vspace{2cm}

\end{document}